\documentclass[11pt]{amsart}
\hoffset=-0.6in
\voffset=-0.6in
\textwidth=6in
\textheight=9in

\usepackage{mathrsfs}
\usepackage{amsfonts}
\usepackage{amsthm}
\usepackage{amssymb}
\usepackage{latexsym}

\theoremstyle{plain}
\newtheorem{theorem}[equation]{Theorem}
\newtheorem{lemma}[equation]{Lemma}
\newtheorem{corollary}[equation]{Corollary}

\newtheorem{proposition}[equation]{Proposition}
\newtheorem{definition}[equation]{Definition}
\theoremstyle{remark}
\newtheorem{remark}[equation]{Remark}

\numberwithin{equation}{section}

\newcommand{\sjump}{\hskip .2 cm}

\newcommand{\ssubset}{\subset\subset}

\begin{document}

\title[Duality and Smoothing]{Duality of holomorphic function spaces and smoothing
properties of 
the Bergman projection}
\author{A.-K. Herbig, J. D. McNeal, \& E. J. Straube}
\subjclass[2000]{32A36, 32A25, 32C37}
\thanks{Research supported in part by Austrian Science Fund FWF grants Y377 and V187N13
(Herbig), National Science Foundation grants DMS--0752826 (McNeal) and DMS--0758534
(Straube), and by the Erwin Schr\"{o}dinger International Institute for
Mathematical Physics.}

\address{Department of Mathematics, \newline University of Vienna, Vienna, Austria}
\email{anne-katrin.herbig@univie.ac.at}
\address{Department of Mathematics, \newline Ohio State University, Columbus, Ohio, USA}
\email{mcneal@math.ohio-state.edu}
\address{Department of Mathematics, \newline Texas A\&M University, College Station, Texas, USA}
\email{straube@math.tamu.edu}

\begin{abstract}
Let $\Omega\ssubset\mathbb{C}^{n}$ be a domain with smooth boundary,  whose
Bergman projection $B$ maps the Sobolev space $H^{k_{1}}(\Omega)$ (continuously) into
$H^{k_{2}}(\Omega)$. We establish two smoothing results: (i) the full Sobolev norm
$\|Bf\|_{k_{2}}$ is controlled by $L^2$ derivatives of $f$ taken along \textit{a single,
distinguished direction} (of order $\leq k_{1}$), and (ii)  the projection of a
\textit{conjugate holomorphic} function in $L^{2}(\Omega)$ is automatically in
$H^{k_{2}}(\Omega)$. There are obvious corollaries for when $B$ is globally regular.
\end{abstract}

\maketitle

\section{{Introduction}}\label{intro}

In this paper, we consider the range of the Bergman projection, $B$, associated to 
a smoothly bounded domain $\Omega\ssubset\mathbb{C}^n$ through the following
question: what functions defined on $\Omega$ does $B$ map to elements of
$C^{\infty}(\overline{\Omega})$? Our particular interest is in finding families of
functions ${\mathcal F}$, ${\mathcal
F}\not\subset C^{\infty}(\overline{\Omega})$, such that $B\left({\mathcal F}\right)\subset
C^{\infty}(\overline{\Omega})$. 

We present two partial answers to this general question, both under the hypothesis that
$B$ satisfies Condition R of Bell--Ligocka \cite{BellLigocka}, which says that
$B\left(C^{\infty}(\overline{\Omega})\right) \subseteq C^{\infty}(\overline{\Omega})$.
In the first, we show that an $L^2$ function $f$ which has square-integrable derivatives
of all orders only in a {\it single, distinguished direction} is
necessarily mapped by $B$ to a function in $C^{\infty}(\overline{\Omega})$. 
The direction is distinguished by being both tangential to $b\Omega$, the boundary of $\Omega$, and not contained in
the complex tangent space to $b\Omega$ (the last property is called \textit{complex transversal}, for short).
Note that no smoothness about $f$ is assumed except in this one direction, yet $Bf$ is smooth up to the
boundary in all directions. This `partial smoothing' property of $B$ has a different
character than traditionally studied mapping properties of $B$,
which concentrate on whether $B$ preserves various Banach space structures, and was
discovered recently by the first two authors \cite{HerMcN10}. Our first result here
generalizes the main theorem in \cite{HerMcN10}, which established partial smoothing of
$B$ under the hypothesis that the $\bar\partial$-Neumann operator on $\Omega$ is compact.
The second result in this paper says that all {\it conjugate holomorphic} functions in
$L^2(\Omega)$ are mapped to $C^{\infty}(\overline{\Omega})$ by $B$. This result differs
from the first as functions in $\overline{A^0}(\Omega)$, where ${A^0}(\Omega)$ denotes the
space of square-integrable holomorphic functions on $\Omega$, need have no derivatives in
$L^2(\Omega)$. 

We now state our results more precisely. $H^{k}(\Omega)$ denotes the standard
$L^{2}$ Sobolev space of order $k$, and $H_{T}^{k}(\Omega)$ denotes the Sobolev space of
order $k$ involving only derivatives in direction $T$, see Definition \ref{D:TSobolev}.

\begin{theorem}\label{T:main}
 Let $\Omega\ssubset\mathbb{C}^{n}$ be a smoothly bounded domain and let $T$
be a smooth vector field  on $\overline{\Omega}$ that is tangential and complex
transversal at the boundary. Suppose that there exist a pair $(k_{1}, k_{2}) \in
\mathbb{N} \times \mathbb{N}$ and a constant $C_{1}>0$ such that
\begin{align} \label{main1}
  \left\|Bf\right\|_{k_{2}}\leq
C_{1}\left\|f\right\|_{k_{1}}\qquad\sjump\forall\sjump f\in H^{k_{1}}(\Omega).
\end{align}
Then there exists a
constant $C_{2}>0$, such that
  \begin{align}\label{main2}
    \|Bf\|_{k_{2}}\leq C_{2}\|f\|_{k_{1},T}\qquad\sjump\forall\sjump f\in
H_{T}^{k_{1}}(\Omega) .
  \end{align}
\end{theorem}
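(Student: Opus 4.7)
I would proceed by duality, exploiting that $B$ is self-adjoint on $L^2(\Omega)$. For any $g \in C^\infty_c(\Omega)$ one has $\langle Bf, g\rangle = \langle f, Bg\rangle$, and the norm $\|Bf\|_{k_2}$ is controlled by the supremum of $|\langle Bf, g\rangle|$ over $g \in C^\infty_c(\Omega)$ with $\|g\|_{-k_2} \le 1$. The goal thus reduces to bounding $|\langle f, Bg\rangle|$ by $C\|f\|_{k_1, T}\|g\|_{-k_2}$. Dualizing \eqref{main1} via $B^* = B$ places $h := Bg$ in a negative Sobolev space of order $-k_1$ with $\|h\|_{-k_1} \le C\|g\|_{-k_2}$; but $h$ is furthermore \emph{holomorphic}, which is the extra structure that should permit passing from general Sobolev control to $T$-derivative-only control.

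The key step will be a factoring lemma for holomorphic data: for such $h$, produce a decomposition
\[
h = \sum_{j=0}^{k_1} T^j h_j, \qquad h_j \in L^2(\Omega), \qquad \sum_{j=0}^{k_1} \|h_j\|_{L^2} \le C \|h\|_{-k_1}.
\]
Granting this, integration by parts along $T$ involves no boundary terms (since $T$ is tangential to $b\Omega$), giving $\langle f, T^j h_j\rangle = \langle (T^*)^j f, h_j\rangle$ with $T^* = -T - \mathrm{div}\,T$. Since $(T^*)^j$ expands to a polynomial in $T$ of degree $\le j$ with smooth coefficients on $\overline\Omega$, one has $\|(T^*)^j f\|_{L^2} \le C\|f\|_{k_1, T}$ for $j \le k_1$, and summing with Cauchy--Schwarz yields the required estimate. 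A standard density step---$C^\infty(\overline\Omega)$-functions are dense in $H^{k_1}_T(\Omega)$ via a $T$-adapted mollification---allows $f$ to be taken smooth throughout the argument.

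The main obstacle is the factoring lemma itself. Here, complex transversality of $T$ is essential: near $b\Omega$ one writes $T = \alpha JN + T_c$ with $\alpha \ne 0$, $N$ the real outward normal, and $T_c$ complex tangential. For holomorphic $h$ one has $JN h = iNh$ modulo smooth remainder (because $\bar L_n h = 0$), so $T^j$ accesses pure normal derivatives of $h$; combined with the $\dbar h = 0$ equations this should make $T$ ``invertible up to controlled remainder'' on holomorphic distributions. Constructing a bounded right-inverse to $T^{k_1}$ on holomorphic distributions of order $-k_1$, with $L^2$ antiderivatives whose norms are uniformly controlled by $\|h\|_{-k_1}$, is the central technical point and where I expect most of the effort. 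Interior contributions are routine: on any $K \ssubset \Omega$, $h$ is $C^\infty$ by interior regularity of holomorphic functions, and a partition of unity glues the boundary and interior analyses.
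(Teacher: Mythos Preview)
Your overall architecture matches the paper's exactly: duality to reduce to pairing $f$ against holomorphic $h$ with $\|h\|_{-k_1}\le 1$, then a ``factoring lemma'' writing $h$ as a sum of iterated $T$-type derivatives of $L^2$-controlled functions, then integration by parts and density. You have also correctly located the main technical work in the factoring lemma (the paper's Proposition~\ref{P:antiderivative}, built from an explicit anti-differentiation operator along $JT_1$ together with Hardy's inequality).

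There are, however, two genuine gaps in your plan. First, the duality step as written does not go through: $\|Bf\|_{k_2}$ is \emph{not} controlled by $\sup\{|\langle Bf,g\rangle|: g\in C^\infty_c(\Omega),\ \|g\|_{-k_2}\le 1\}$, because $H^{-k_2}(\Omega)=(H^{k_2}_0(\Omega))^*$, and the holomorphic function $Bf$ is not in $H^{k_2}_0(\Omega)$. For the same reason your ``dualization of \eqref{main1}'' fails: to estimate $\|Bg\|_{-k_1}$ you would need $|(g,B\varphi)|\le \|g\|_{-k_2}\|B\varphi\|_{k_2}$, but $B\varphi\notin H^{k_2}_0(\Omega)$. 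The paper avoids this by testing each $D^\alpha(Bf)$ against $g\in C^\infty_c(\Omega)$ with $\|g\|\le 1$ (plain $L^2$ norm), integrating by parts to get $(f,BD^\alpha g)$, and then showing $\|BD^\alpha g\|_{-k_1}\le C_1$ directly from \eqref{main1}; see Proposition~\ref{P:pieceofduality}. Second, the decomposition must be $h=\sum_j \overline{T}^{\,j}\mathcal H_j$, not $\sum_j T^j h_j$: for a complex vector field $T$ the formal $L^2$ adjoint is $T^*=-\overline{T}-\operatorname{div}\overline{T}$, so your formula $T^*=-T-\operatorname{div}T$ (valid only for real $T$) would produce $\overline{T}$-derivatives of $f$, not the $T$-derivatives needed for $\|f\|_{k_1,T}$. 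With $\overline{T}^{\,j}$ in the decomposition one gets $(\overline{T})^*=-T+\text{(zeroth order)}$, which is what is wanted.
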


Note that in \eqref{main1}, automatically $k_{1} \geq k_{2}$. We emphasize that
\eqref{main2} is a genuine estimate: if the
right hand side is finite, i.e., $f$ has $T$ derivatives up to order $k_{1}$ in
$L^{2}(\Omega)$, then $Bf \in H^{k_{2}}(\Omega)$, and the estimate holds.

Condition R is equivalent to the statement that for each $k_{2} \in \mathbb{N}$, there
exists $k_{1} \in \mathbb{N}$ such that $B: H^{k_{1}}(\Omega) \rightarrow
H^{k_{2}}(\Omega)$. Combined with the Sobolev Lemma to the effect that
$\cap_{k=0}^{\infty}H^{k}(\Omega) =
C^{\infty}(\overline{\Omega})$, Theorem \ref{T:main} implies in particular the
$C^{\infty}(\overline{\Omega})$ result described in the second paragraph above:

\begin{corollary}\label{C:main}
Assume $\Omega$ and $T$ are as in Theorem \ref{T:main}, and that Condition R holds for
$\Omega$. Then $B$ maps $H_{T}^{\infty}:=\cap_{k=0}^{\infty}H_{T}^{k}(\Omega)$ (continuously) to
$C^{\infty}(\overline{\Omega})$.
\end{corollary}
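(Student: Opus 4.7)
The plan is to read the corollary off Theorem \ref{T:main} directly, combined with the two facts already highlighted in the paragraph preceding the statement: the reformulation of Condition R as a family of $L^{2}$--Sobolev mapping estimates for $B$, and the Sobolev lemma identifying $\cap_{k=0}^{\infty} H^{k}(\Omega)$ with $C^{\infty}(\overline{\Omega})$.

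Concretely, I would fix an arbitrary $k_{2} \in \mathbb{N}$. By the equivalent form of Condition R, there exists some $k_{1}=k_{1}(k_{2}) \in \mathbb{N}$ such that $B : H^{k_{1}}(\Omega) \to H^{k_{2}}(\Omega)$, and the closed graph theorem then supplies a constant $C_{1}$ for which hypothesis \eqref{main1} of Theorem \ref{T:main} holds with this pair. Theorem \ref{T:main} immediately upgrades \eqref{main1} to \eqref{main2}: there is a constant $C_{2}=C_{2}(k_{2})$ with $\|Bf\|_{k_{2}} \leq C_{2}\|f\|_{k_{1},T}$ for all $f \in H_{T}^{k_{1}}(\Omega)$. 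Any $f \in H_{T}^{\infty}$ lies in $H_{T}^{k_{1}}(\Omega)$ for every choice of $k_{1}$, so the right-hand side is finite and $Bf \in H^{k_{2}}(\Omega)$. Since $k_{2}$ was arbitrary, $Bf \in \cap_{k=0}^{\infty} H^{k}(\Omega) = C^{\infty}(\overline{\Omega})$ by the Sobolev lemma. Continuity with respect to the natural Fr\'{e}chet topologies on $H_{T}^{\infty}$ (seminorms $\|\cdot\|_{k,T}$) and on $C^{\infty}(\overline{\Omega})$ (seminorms $\|\cdot\|_{k}$) is nothing other than this indexed family of estimates.

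There is essentially no obstacle here beyond Theorem \ref{T:main} itself; the corollary is a formal consequence. The only point worth flagging explicitly in the write-up is the equivalence between Condition R as originally stated (preservation of $C^{\infty}(\overline{\Omega})$) and the matching family of Sobolev continuity estimates $B : H^{k_{1}} \to H^{k_{2}}$, which is the standard reformulation going back to Bell--Ligocka.
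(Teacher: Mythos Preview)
Your proposal is correct and matches the paper's own reasoning: the paper does not give a separate proof of the corollary but derives it in the sentence preceding the statement, exactly as you do, by combining the Sobolev reformulation of Condition R with Theorem \ref{T:main} and the identification $\cap_{k=0}^{\infty}H^{k}(\Omega)=C^{\infty}(\overline{\Omega})$.
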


The method we use to prove Theorem \ref{T:main} is rather general and applicable in
other situations, i.e., to other operators connected to elliptic PDEs and to other spaces
of holomorphic functions. 

 The proof consists of two distinct steps. The
first step is to show that when $B$ satisfies the regularity condition \eqref{main1}, the
$H^{k_{2}}(\Omega)$ norm of a holomorphic function $g$ can be estimated by pairing $g$,
in the ordinary $L^2$ inner product, against holomorphic functions contained in the unit ball of
$H^{-k_{1}}(\Omega)$ (see Proposition \ref{P:pieceofduality} below). This is a
special fact about holomorphic functions (but it can also be established for solutions to
other homogeneous elliptic systems, see \cite{Bell82b, Straube84, Ligocka86} and Appendix
B in \cite{Boas87}). This special type of duality for holomorphic functions originates in
the work of Bell \cite{Bell81a, Bell82a}, and was subsequently extended and refined in
\cite{Bell82b, BellBoas84, Komatsu84, Straube84, Barrett95}. For our purposes, duality is
used to reduce the problem of estimating $\|Bf\|_{k_{2}}$ significantly. In order to
illustrate how this works, assume for simplicity that $k_{1}=k_{2}=1$. Then, for some constant $c>0$,
\begin{align}\label{E:Bf1norm}
\|Bf\|_1&\leq c \sup\left\{\left|(Bf,h)\right|: h\in A^{1}(\Omega),\sjump\|h\|_{-1}\leq 1
    \right\}\notag\\
    &=c\sup\left\{\left|(f,h)\right|: h\in A^{1}(\Omega),\sjump\|h\|_{-1}\leq 1
    \right\}.
\end{align}
The key point is that the self-adjointness of $B$ in the ordinary $L^2$ inner product,
which yields the last equality, `eliminates' $B$ from the right-hand side of
\eqref{E:Bf1norm}. This obviates the need to study the commutator of $B$ with differential
operators in order to bound the left-hand side of \eqref{E:Bf1norm}; estimating such
commutators is a difficult problem in general since $B$ is an abstractly given integral
operator. (This effect also occurs when one can use vector fields with holomorphic
coefficients to control Sobolev norms of $Bf$, see \cite{Barrett86}.) A similar use of
duality seems applicable to other self-adjoint operators, and also to $B$ itself in other
scales of Banach spaces besides $H^k(\Omega)$. 

Let $T$ be the vector field from Theorem \ref{T:main}. Once \eqref{E:Bf1norm} is in hand,
the second step (still assuming $k_{1}=k_{2}=1$) consists of replacing $h$, on the
right-hand side of \eqref{E:Bf1norm}, by
the sum of the $\overline{T}$ derivative of a function, ${\mathcal H}_1$, and of another
function, ${\mathcal H}_2$, both of whose $L^2$ norms are uniformly bounded (when
$\|h\|_{-1} \leq 1$). Since $T$ is tangential, it follows that
\begin{align*} 
(f,h)=(f,\overline{T}{\mathcal H}_1+{\mathcal H}_2) 
&=(\overline{T}^{*}f, {\mathcal H}_1)+(f,{\mathcal H}_2) \\
&\leq C\left( \|Tf\| +\|f\|\right),
\end{align*}
where the last inequality follows after
noticing that the $L^{2}$ adjoint $\overline{T}^{*}$ differs from $-T$
by a term  bounded in $L^{2}$. In this very simple way,
the full Sobolev norm of $Bf$ of order $1$ is controlled by the $L^2$ norm of the derivative of $f$
in the special direction $T$, provided ${\mathcal H}_1, {\mathcal H}_2$ exist. 

Proving the existence of ${\mathcal H}_1$ and ${\mathcal H}_2$, and similar functions for 
higher powers of $\overline{T}$, is conceptually simple, but, for higher powers,
somewhat technical. This fact accounts for much of the length of this paper.
To simplify matters temporarily suppose that $T$ is of the form
 $T = T_{1} + L$, where both $T_{1}$
and $L$ are tangential at the boundary, $T_{1}$ is real, and $L$ is of type $(1,0)$. Note
that $T_{1}$ is complex transversal because $T$ is; consequently, if $J$ denotes the
complex structure map on $\mathbb{C}^{n}$, $JT_{1}$ is transversal to the boundary of
$\Omega$. Let now $h\in A^{1}(\Omega)$ with $\|h\|_{-1}\leq 1$ be given. The
goal is to write $h=\overline{T}{\mathcal H}_1+ {\mathcal H}_2$ where $\|{\mathcal
H}_1\|_{L^2}, \|{\mathcal H}_2\|_{L^2}$ are bounded by constants depending only on
$\Omega$ and $T$. To this end, consider first
$\mathfrak{A}\circ(JT_{1}h)$, where $\mathfrak{A}$ is the operator of
`anti-differentiation along the direction $JT_{1}$'. Then
\begin{equation}\label{Intro:1}
h(z)= \mathfrak{A}\circ JT_{1}h (z) +h(q),
\end{equation}
where $q=q(z)$ varies in a fixed compact subset of $\Omega$. Because $h$ is holomorphic,
the contributions given by $h(q)$ are easily shown to be bounded in $L^2$ and are folded into the function $\mathcal{H}_2$. Furthermore, the Cauchy--Riemann
equations show that
$JT_{1}(h)= -i\,T_{1}(h)$. Thus 
\begin{align}\label{Intro:2}
\mathfrak{A}\circ JT_{1}(h)&= \mathfrak{A}\circ  (-i)\, T_{1}h\notag \\
&= -i\left(\mathfrak{A}\circ \overline{T}(h) - \mathfrak{A}\left(\overline{L}
h\right)\right) \; .
\end{align}
But $\overline{L}h$ vanishes since $h$ is holomorphic. As a last step, we commute
$\overline{T}$ with $\mathfrak{A}$ in \eqref{Intro:2}:
\begin{equation}\label{Intro:3}
\mathfrak{A}\circ \overline{T}(h)=\overline{T}\circ\mathfrak{A}(h)+\left[ \mathfrak{A}, 
\overline{T}\right](h).
\end{equation}
The commutator in \eqref{Intro:3} is straightforward to analyze since both $\mathfrak{A}$ 
and $\overline{T}$ are explicit operators; this term forms the final component of the
function $\mathcal{H}_2$. The term $-i\,\mathfrak{A}(h)$ is the function $\mathcal{H}_1$.
The needed inequality $\|\mathfrak{A}(h)\|\leq C \|h\|_{-1}$ is a consequence of
Hardy's inequality, which says that $\mathfrak{A}$ gains a factor 
of the boundary distance $d(z)$, together with the fact that such a factor gains a
derivative in the case of holomorphic functions:
\begin{align}\label{Hardy-hol}
\|\mathfrak{A}(h)\|\leq c_{1}\|d(z)h\| \leq c_{2}\|h\|_{-1}\; .
\end{align}

\bigskip

Coming to our second result, we start with an observation about the Bergman projection on
the unit disk. Modulo constants, conjugate holomorphic functions on the unit disc are
orthogonal to the Bergman space. Equivalently: their Bergman projections are constant
functions. Of course, this fails on general (even planar) domains. Our second result says
that nevertheless, if the Bergman projection satisfies a regularity estimate such as 
\eqref{main1}, projections of conjugate holomorphic functions are still as good as
projections of functions smooth up to the boundary: they belong to $H^{k_{2}}(\Omega)$. In
particular, if Condition R holds, these projections are themselves smooth up to the
boundary.

\begin{theorem}\label{T:holconjsmoothing}
Let $\Omega\ssubset\mathbb{C}^{n}$ be a smoothly bounded domain. 
Suppose that the Bergman projection $B$ on $\Omega$ satisfies the regularity
condition
\eqref{main1}. Then there is a constant $C > 0$ such that
\begin{align}\label{E:holconjsmoothingfinite}
\|Bf\|_{k_{2}} \leq C\|f\|
\end{align}
for all conjugate holomorphic functions $f$ in $L^{2}(\Omega)$. 
\end{theorem}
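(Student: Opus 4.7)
The plan follows the two-step strategy of Theorem~\ref{T:main}, with the new ingredient being the conjugate-side vanishing $Lf=0$ for every $(1,0)$ vector field $L$, in contrast to the $\overline{L}h=0$ used in the proof of Theorem~\ref{T:main}.

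First, by Proposition~\ref{P:pieceofduality} and $L^{2}$-self-adjointness of $B$,
\begin{equation*}
\|Bf\|_{k_{2}}\leq c\,\sup\{|(Bf,h)| : h\in A^{k_{1}}(\Omega),\ \|h\|_{-k_{1}}\leq 1\} = c\,\sup|(f,h)|,
\end{equation*}
since $(Bf,h)=(f,Bh)=(f,h)$ when $h$ is holomorphic. It therefore suffices to show $|(f,h)|\leq C\|f\|_{L^{2}}$ uniformly over such $h$, for $f=\overline{g}$ with $g\in A^{0}(\Omega)$.

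Next, choose as in Theorem~\ref{T:main} a smooth tangential, complex-transversal vector field $T=T_{1}+L$ with $T_{1}$ real and $L$ of type $(1,0)$. The iterated antidifferentiation construction from the proof of Theorem~\ref{T:main} yields a decomposition $h=\overline{T}\mathcal{H}_{1}+\mathcal{H}_{2}+\cdots$ with $\|\mathcal{H}_{j}\|_{L^{2}}\leq C\|h\|_{-k_{1}}$. Substituting into $(f,h)$ and integrating $\overline{T}^{\,j}$ by parts onto $f$ produces pairings $(T^{\,j}f,\mathcal{H}_{j})$ plus $L^{2}$-bounded remainders. Writing $T_{1}=M+\overline{M}$ with $M$ of type $(1,0)$, the vanishings $Lf=Mf=0$ collapse $T^{\,j}f$ to $\overline{M}^{\,j}f=\overline{M^{\,j}g}$ modulo smooth-coefficient commutator terms that can be absorbed into the remainders.

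The main obstacle is to estimate the pairings $(\overline{M^{\,j}g},\mathcal{H}_{j})$ by $C\|g\|_{L^{2}}$. Although $M^{\,j}g$ need not lie in $L^{2}(\Omega)$, it is a $j$th-order holomorphic derivative of $g\in A^{0}(\Omega)$, so Cauchy's interior estimate yields $\|d(z)^{\,j}M^{\,j}g\|_{L^{2}}\leq C\|g\|_{L^{2}}$, where $d$ is the boundary distance. The $\mathcal{H}_{j}$, being iterated antiderivatives along $JT_{1}$ starting from the boundary, inherit vanishing of order $j$ at $b\Omega$, so a Hardy-type inequality permits extraction of the weight $d(z)^{\,j}$. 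Balancing these two ingredients via a weighted Cauchy--Schwarz should close the estimate. Tracking the precise powers of $d(z)$, the commutator terms, and the boundary contributions at each stage of the iteration is expected to be the technical heart of the argument.
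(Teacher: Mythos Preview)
Your proposal contains a genuine gap in the final step. The claim that ``the $\mathcal{H}_{j}$, being iterated antiderivatives along $JT_{1}$ starting from the boundary, inherit vanishing of order $j$ at $b\Omega$'' is not correct. The operator $\mathfrak{A}$ integrates along the flow of $\mathcal{N}=JT_{1}$ from the \emph{interior} (parameter $s=-1$) to the current point (parameter $s=0$), not from the boundary; as $x\to b\Omega$, $\mathfrak{A}[g](x)=\int_{-1}^{0}g(\varphi(s,x))\,ds$ has no reason to vanish. The Hardy-type estimate in the paper reads $\|t_{x}^{\ell}\mathfrak{A}[g]\|\lesssim\|t_{x}^{\ell+1}g\|$ for $\ell\geq 0$: it lets you \emph{add} a factor of the boundary distance on the input side, not \emph{remove} one on the output side. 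Consequently there is no control on $\|d^{-j}\mathcal{H}_{j}\|$, and the weighted Cauchy--Schwarz balancing you describe, pairing $\|d^{j}M^{j}g\|$ against $\|d^{-j}\mathcal{H}_{j}\|$, cannot close.

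The paper's proof avoids the antidifferentiation machinery entirely and is much shorter. After the same duality reduction to estimating $|(f,h)|$, it exploits the special algebraic feature that $\overline{f}h$ is itself \emph{holomorphic} (since $f$ is conjugate holomorphic). For holomorphic functions, negative Sobolev norms are equivalent to weighted sup-norms with powers of $d_{b\Omega}$; multiplying the sup-norm bounds for $\overline{f}$ and $h$ gives $\|\overline{f}h\|_{-m}\lesssim\|f\|\,\|h\|_{-k_{1}}$ for a suitable $m$. Finally, the integral of a holomorphic (hence harmonic) function over $\Omega$ is controlled by any negative Sobolev norm, yielding $|\int_{\Omega}\overline{f}h|\lesssim\|\overline{f}h\|_{-m}$. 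This product-of-holomorphic-functions trick is the idea you are missing; it replaces the delicate balancing you attempt with a direct two-line estimate.
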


In contrast to Theorem \ref{T:main}, smoothing here takes place in all directions. But
just as with Theorem \ref{T:main}, there is an immediate corollary for when Condition R
holds.
\begin{corollary}\label{holconjsmoothingcondR}
Let $\Omega$ be as in Theorem \ref{T:holconjsmoothing} and assume that Condition R holds.
Then for every $k \in \mathbb{N}$ there is a constant $C_{k} > 0$ such that
\begin{align}\label{E:holconjsmoothinginfinite}
\|Bf\|_{k} \leq C_{k}\|f\|
\end{align}
for $f$ conjugate holomorphic in $L^{2}(\Omega)$. In particular, $Bf$ is smooth up to the
boundary.
\end{corollary}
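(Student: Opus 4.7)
The plan is to deduce this corollary directly from Theorem \ref{T:holconjsmoothing} by invoking the well-known equivalent formulation of Condition R stated immediately after Theorem \ref{T:main}: namely, for each $k_{2} \in \mathbb{N}$ there exists $k_{1} \in \mathbb{N}$ such that $B:H^{k_{1}}(\Omega)\to H^{k_{2}}(\Omega)$ continuously. This supplies, for every target Sobolev order $k$, a pair $(k_{1},k)$ for which the hypothesis \eqref{main1} of Theorem \ref{T:holconjsmoothing} holds.

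First, I fix an arbitrary $k \in \mathbb{N}$. Applying the above consequence of Condition R with $k_{2}=k$, I obtain some $k_{1}=k_{1}(k)$ and a constant so that $\|Bg\|_{k} \leq C'\|g\|_{k_{1}}$ for all $g \in H^{k_{1}}(\Omega)$. This is precisely \eqref{main1} with the pair $(k_{1}, k)$, so Theorem \ref{T:holconjsmoothing} applies and yields a constant $C_{k}>0$ such that
\begin{equation*}
\|Bf\|_{k} \leq C_{k}\|f\|
\end{equation*}
for every conjugate holomorphic $f \in L^{2}(\Omega)$. This is exactly \eqref{E:holconjsmoothinginfinite}.

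For the final assertion, I note that the estimate just established shows $Bf \in H^{k}(\Omega)$ for \emph{every} $k \in \mathbb{N}$ whenever $f$ is conjugate holomorphic in $L^{2}(\Omega)$. The Sobolev Lemma $\bigcap_{k=0}^{\infty} H^{k}(\Omega) = C^{\infty}(\overline{\Omega})$ then immediately gives $Bf \in C^{\infty}(\overline{\Omega})$.

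Since the corollary is a straightforward application of Theorem \ref{T:holconjsmoothing} together with the standard reformulation of Condition R, there is no genuine obstacle in the argument; the substance of the result lies entirely in Theorem \ref{T:holconjsmoothing}.
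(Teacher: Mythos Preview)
Your proof is correct and matches the paper's approach: the paper treats this corollary as immediate from Theorem \ref{T:holconjsmoothing}, using exactly the reformulation of Condition R and the Sobolev Lemma that you invoke (mirroring the derivation of Corollary \ref{C:main} from Theorem \ref{T:main}).
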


The proof of Theorem \ref{T:holconjsmoothing}, say again for $k_{1}=k_{2}=1$, also starts
with \eqref{E:Bf1norm}. But now, estimating $|(f,h)|$ means we are estimating
the (absolute value of) the integral of a {\it holomorphic} function (namely
$\overline{f}h$) over $\Omega$. Such an integral can be dominated by
$\|\overline{f}h\|_{-m}$, for any $m \in\mathbb{N}$ (\cite{Bell82b, BellBoas84, Komatsu84,
Straube84}). Finally, the equivalence, for holomorphic functions, of membership in a
negative Sobolev space and polynomial boundedness in the reciprocal of the boundary
distance (\cite{Bell82b, Straube84}) gives the estimate $\|\overline{f}h\|_{-m} \leq
C_{m,k} \|f\|\,\|h\|_{-k}$ for $m$ big enough (relative to $k$). We remark that the last
two steps are valid for functions in the kernel of more general elliptic operators
(systems), see \cite{LionsMagenes} and \cite{Roitberg},
respectively, for the relevant results. However, the first step after invoking
\eqref{E:Bf1norm} may fail: for example, the product of two harmonic functions need not be
harmonic.

\medskip

Regularity properties like \eqref{main1} are known to hold on a large class of domains.
When the domain $\Omega$ is pseudoconvex, these properties are essentially
equivalent to corresponding regularity properties of the $\overline{\partial}$-Neumann
operator. For example when the domain is of finite
type, or when it admits a defining function that is plurisubharmonic at boundary points
(in particular, when the domain is convex), \eqref{main1} holds for the pair $(k,k)$ for
all $k \in \mathbb{N}$. Nonpseudoconvex domains on which regularity estimates
for the Bergman projection hold include Reinhardt domains, complete Hartogs domains in
$\mathbb{C}^{2}$, and domains with `approximate symmetries'. For these results and for
further information on the $L^{2}$ Sobolev regularity theory of the Bergman projection
and of the $\overline{\partial}$-Neumann operator, we refer the reader to
\cite{ChenShaw01, BoasStraube99, Straube10} and their references. \cite{BoasStraube99}
also contains a discussion of the connection between the regularity theory of the Bergman
projection and duality of holomorphic function spaces.

\medskip
The paper is laid out as follows. In Section \ref{S:2} we define the notions
needed for
Theorem \ref{T:main}, collect some standard definitions, and give a new proof of (a
portion of) Bell's duality theorem on holomorphic functions in Sobolev spaces. In Section
\ref{S:3}, we state the anti-differentiation result, Proposition \ref{P:antiderivative},
then give a proof of Theorem \ref{T:main} modulo a proof of Proposition
\ref{P:antiderivative}. Section \ref{S:anti} is devoted to a proof of the
anti-differentiation result, broken down into several subsections. We develop the algebra
of operators associated to anti-differentiation along integral curves to transverse vector
fields in these subsections in some detail, as we need these results for our proof of
Proposition \ref{P:antiderivative}; we also mention that the results in Section
\ref{S:anti} can be applied elsewhere. In Section \ref{S:5} we prove Theorem
\ref{T:holconjsmoothing}.

\bigskip

\section{{Function spaces, duality, and complex transversality}}\label{S:2}

Throughout the paper, $\Omega$ will denote a domain with smooth boundary
$b\Omega$, contained in
$\mathbb{C}^{n}$. The standard $L^2$ inner product and norm on functions defined on
$\Omega$ will be denoted 
$$(f,g)=\int_\Omega f\, \bar g \qquad\text{and}\qquad \|f\|=\left(\int_\Omega |f|^2\right)^{1/2},$$
where the integrals are taken with respect to the Euclidean volume element.
If $k$ is a positive integer, we let $H^{k}(\Omega)$ denote the usual Sobolev space of complex-valued functions whose norm $\|.\|_k$
is induced by the inner product
$$(f,g)_k=\sum_{|\alpha|\leq k}\left(D^\alpha f, D^\alpha g\right),$$
for $\alpha$ a multi-index and $D^\alpha$ denoting differentiation of order $\alpha$. 
Let $C^{\infty}_{0}(\Omega)$ denote the set of smooth
functions with compact support in $\Omega$ and $C^{\infty}(\overline{\Omega})$ the set of 
functions smooth up to $b\Omega$. As is well-known,
$C^{\infty}(\overline{\Omega})$ is dense in $H^{k}(\Omega)$. 
We let $H_{0}^{k}(\Omega)$ be the closure of $C^{\infty}_{0}(\Omega)$
in $H^{k}(\Omega)$.

The dual space of $H_{0}^{k}(\Omega)$ will be denoted $H^{-k}(\Omega)$. Because
$C^{\infty}_{0}(\Omega)$ is dense in $H^{k}_{0}(\Omega)$, $H^{-k}(\Omega)$ imbeds
naturally into the space of distributions $\mathcal{D}^{\prime}(\Omega) =
\left(C^{\infty}_{0}(\Omega)\right)^{*}$ on $\Omega$, and $L \in
\mathcal{D}^{\prime}(\Omega)$ belongs to $H^{-k}(\Omega)$ precisely when 
\begin{equation}\label{E:dualnorm}
\|L\|_{-k}=\sup\left\{\left|\left\langle L,\phi\right\rangle\right|: 
\phi\in C^{\infty}_{0}(\Omega), \|\phi\|_{k} \leq 1\right\}
\end{equation}
is finite. Here $\left\langle L,\phi\right\rangle$ denotes the action of the distribution 
$L$ on the test function $\phi$. We shall only need to compute \eqref{E:dualnorm} on
certain $L\in L^2(\Omega)$, in which case $\left\langle L,\phi\right\rangle=(L,\bar\phi)$.

\medskip

The subspace of $H^{k}(\Omega)$ consisting of holomorphic functions will be denoted 
$A^{k}(\Omega)$. For consistency of notation, we let $A^{0}(\Omega)$ denote the Bergman
space: $A^{0}(\Omega)= L^2(\Omega)\cap{\mathcal O}(\Omega)$, where ${\mathcal O}(\Omega)$
denotes the space of holomorphic functions on 
$\Omega$. For functions $f$ in $A^{-k}(\Omega)$, the norm $\| f\|_{-k}$ is comparable to 
a weighted $L^2$ norms of $f$, with weight equal to the corresponding positive power of
the distance to $b\Omega$ (\cite{Ligocka86, Boas87}). If $r$ is a smooth
defining function for $\Omega$ -- i.e., $\Omega=\{r<0\}$ and $dr\neq 0$ when $r=0$ -- we
shall use the following version of (half of) this fact:
  for $k\in\mathbb{N}$ there exists a constant $\beta_{k}>0$ such that
  \begin{equation}\label{E:eqnorms2}
    \|(-r)^{k}h\|\leq\beta_{k}\|h\|_{-k}\qquad\sjump\forall\sjump h\in A^{0}(\Omega).
  \end{equation}
  
\medskip  
  
The Bergman projection is the orthogonal projection of $L^2(\Omega)$ onto
$A^{0}(\Omega)$. We say that $B$ satisfies {\it Condition R} (see \cite{BellLigocka}) if
it maps $C^{\infty}(\overline{\Omega})$ to itself (by continuity in $L^{2}(\Omega)$ and
the closed graph theorem for Fr\'{e}chet spaces, this map is then automatically
continuous). This property is often also referred to as {\it global regularity}. An
equivalent formulation is: for each $k_{2} \in \mathbb{N}$ there exists $k_{1} \in
\mathbb{N}$ such that $B$ maps $H^{k_{1}}(\Omega)$ (continuously) into
$H^{k_{2}}(\Omega)$. The special case where \eqref{main1}  holds with $k_{1}=k_{2}$ is
usually referred to as {\it exact regularity} at level $k=k_{1}=k_{2}$. We remark that,
rather intriguingly, no instance is known where $B$ satisfies Condition R, but is not
exactly regular at all levels.

\medskip
  
As mentioned in the introduction, a crucial fact for our proof of Theorem \ref{T:main}
is that regularity of the Bergman projection as in \eqref{main1} implies that the Sobolev
norm of a function $f\in A^{k_{2}}(\Omega)$ is controlled by pairing $f$ with a family of
holomorphic functions in the {\it ordinary} $L^2$ inner product.  Duality results of this
kind are known, \cite{Bell81a, Bell82a, Bell82b, Straube84, BellBoas84, Komatsu84,
Barrett95}, but they are  formulated for exact regularity, rather than \eqref{main1}, or
for Condition R (i.e., assuming \eqref{main1} holds for all $k_2\in\mathbb{N}$). In these
situations, a  clean cut formulation is
possible: for example, $A^{k}(\Omega)$ and $A^{-k}_{cl}(\Omega)$, the closure of
$A^{0}(\Omega)$ in $A^{-k}(\Omega)$, are mutually dual, under a natural extension of the
$L^{2}$ pairing, when $B$ is exactly regular at level $k$. When there is a loss of
derivatives, that is when $k_{2} < k_{1}$ in \eqref{main1}, this duality has a somewhat
less striking formulation. For this reason, we only state below what we need 
and give, for the reader's convenience, a straightforward proof (which seems to be new
even for the case $k_{1}=k_{2}$).
 
 \begin{proposition}\label{P:pieceofduality}
  Let $\Omega\subset\mathbb{C}^{n}$ be a domain with smooth boundary. Suppose that 
\eqref{main1} holds. Then there exists a constant $c>0$ such that
  \begin{align}\label{1stdual}
    \|f\|_{k_{2}}\leq c\sup\left\{\left|\left(f, h\right) \right|: h\in
A^{k_{2}}(\Omega), 
\|h\|_{-k_{1}}\leq 1\right \}
  \end{align}
   holds for all $f\in A^{0}(\Omega)$. The constant $c$ depends on $C_{1}$ from \eqref{main1},
$n$ and $k_{2}$.
\end{proposition}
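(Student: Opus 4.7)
The plan is to show $\|D^\alpha f\|\leq c\,N(f)$ for every multi-index $\alpha$ with $|\alpha|\leq k_2$, where $N(f)$ denotes the supremum on the right side of \eqref{1stdual}; summing over such $\alpha$ yields the claim. For this I would construct, for each $\psi\in C_0^\infty(\Omega)$ and each such $\alpha$, a holomorphic test function $h_\psi^\alpha\in A^{k_2}(\Omega)$ whose negative Sobolev norm $\|h_\psi^\alpha\|_{-k_1}$ is controlled by $\|\psi\|$ and which captures the pairing $(f,D^\alpha\psi)$ via $(f,h_\psi^\alpha)$.

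The natural choice is $h_\psi^\alpha := B(D^\alpha\psi)$. Since $D^\alpha\psi\in C_0^\infty(\Omega)\subset H^{k_1}(\Omega)$, the hypothesis \eqref{main1} places $h_\psi^\alpha$ in $A^{k_2}(\Omega)$. To bound $\|h_\psi^\alpha\|_{-k_1}$, pair $h_\psi^\alpha$ against an arbitrary $\chi\in C_0^\infty(\Omega)$ with $\|\chi\|_{k_1}\leq 1$: self-adjointness of $B$ on $L^2$ rewrites $(B(D^\alpha\psi),\chi)$ as $(D^\alpha\psi,B\chi)$; integration by parts (legitimate because $\psi$ is compactly supported) then moves the derivatives onto $B\chi$, giving $|(\psi,D^\alpha B\chi)|\leq \|\psi\|\,\|B\chi\|_{k_2}\leq C_1\|\psi\|$, where the last inequality uses \eqref{main1}. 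Taking the supremum over $\chi$ yields $\|h_\psi^\alpha\|_{-k_1}\leq C_1\|\psi\|$.

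Because $f\in A^0(\Omega)$ satisfies $Bf=f$, one more use of self-adjointness gives $(f,D^\alpha\psi)=(f,B(D^\alpha\psi))=(f,h_\psi^\alpha)$, and applying the definition of $N(f)$ to $h_\psi^\alpha/\|h_\psi^\alpha\|_{-k_1}$ yields $|(f,D^\alpha\psi)|\leq C_1\,N(f)\,\|\psi\|$ for every $\psi\in C_0^\infty(\Omega)$. This identifies the distributional derivative $D^\alpha f$ as a bounded linear functional on $C_0^\infty(\Omega)$ in the $L^2$ norm, so by density and Riesz representation $D^\alpha f\in L^2(\Omega)$ with $\|D^\alpha f\|\leq C_1\,N(f)$; summing over $|\alpha|\leq k_2$ gives $\|f\|_{k_2}\leq c\,N(f)$ with $c$ depending on $C_1$, $n$, and $k_2$. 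There is no real obstacle once the test function $h_\psi^\alpha = B(D^\alpha\psi)$ is in hand: \eqref{main1} enters only through the single inequality $\|B\chi\|_{k_2}\leq C_1\|\chi\|_{k_1}$, and everything else is two applications of self-adjointness plus one compactly supported integration by parts.
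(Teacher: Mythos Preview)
Your proof is correct and essentially identical to the paper's: both set $h = B(D^{\alpha}g)$ for $g\in C_0^\infty(\Omega)$, bound $\|h\|_{-k_1}$ by self-adjointness of $B$, one integration by parts, and \eqref{main1}, then use $(f,D^\alpha g)=(Bf,D^\alpha g)=(f,BD^\alpha g)$ to conclude $D^\alpha f\in L^2$ with the required bound. The only cosmetic difference is that the paper avoids the (harmless) division by a possibly zero $\|h_\psi^\alpha\|_{-k_1}$ by noting directly that $\|BD^\alpha(g/C_1)\|_{-k_1}\le 1$ and bounding one supremum by the other.
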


\begin{remark}\label{dualitygenuine}
It is part of \eqref{1stdual} that if the right hand side is finite, then so is the left
hand side. That is, \eqref{1stdual} is a genuine estimate, as opposed to an \emph{a
priori} estimate.
\end{remark}

\begin{proof}
  Let $f\in A^{0}(\Omega) \subset C^{\infty}(\Omega)$ and let $\alpha$ be a fixed 
multi-index with $|\alpha|\leq k_{2}$. Then $D^{\alpha}f \in L^{2}(\Omega)$ if and only
if 
\begin{align*}
     \sup\left\{\left|\left( D^{\alpha}f,g \right) 
\right|: g\in C^{\infty}_{0}(\Omega),\sjump \|g\|\leq 1 \right\} < \infty \;.
\end{align*}
Moreover, in this case, $\|D^{\alpha}f\|$ is given by this supremum. Because $g\in
C_{0}^{\infty}(\Omega)$ and $f\in C^{\infty}(\Omega)$, integration by parts yields
  \begin{align*}
  \left|  \left( D^{\alpha}f,g\right)\right|=\left|(-1)^{|\alpha|}
\left( f, D^\alpha g\right)\right| 
    =\left|\left(Bf, D^\alpha g\right)\right| 
    =\left|\left(f, BD^\alpha g\right)\right|.
  \end{align*}
  Thus, 
  \begin{align}\label{E:Dalphafestimate}
     \left\|D^{\alpha}f\right\|=\sup\left\{\left|\left( f,BD^{\alpha}g \right) \right|: 
g\in C^{\infty}_{0}(\Omega),\sjump \|g\|\leq 1 \right\}.
      \end{align}
Note that \eqref{main1} implies that $BD^{\alpha}g \in A^{k_{2}}(\Omega)$.
The aim now is to see that \eqref{main1} forces $BD^{\alpha}g$ to be
uniformly bounded in $H^{-k_{1}}(\Omega)$. For that, let $\varphi \in C^{\infty}_{0}(\Omega)$ with
$\|\varphi\|_{k_{1}} \leq 1$. Integration by parts and Cauchy--Schwarz give
\begin{align}\label{est1}
   \left |\left(\varphi, BD^{\alpha}g\right)\right| = \left|\left(D^{\alpha}B\varphi,
  g\right)\right| \leq\|B\varphi\|_{k_{2}}\|g\| \leq
C_{1}\|\varphi\|_{k_{1}}\|g\|   
  \leq C_{1} \; ,
\end{align}
where \eqref{main1} was used again. Thus, \eqref{E:dualnorm} shows $\|BD^{\alpha}g\|_{-k_{1}}
\leq C_{1}$.  Returning to \eqref{E:Dalphafestimate}, we now obtain that
\begin{align*}
  \left\|D^{\alpha}f\right\|&=C_{1}\sup\left\{\left|\left(
f,BD^{\alpha}\frac{g}{C_{1}} \right) \right|: 
g\in C^{\infty}_{0}(\Omega),\sjump \|g\|\leq 1 \right\}\\
&\leq C_{1}\sup\left\{\left|\left( f,h\right) \right|: 
h\in A^{k_{2}}(\Omega),\sjump \|h\|_{-k_{1}}\leq 1 \right\}.
\end{align*}
Summing over $|\alpha|\leq k_{2}$ gives \eqref{1stdual} with $c=\sum_{j=0}^{k_{2}} {2n \choose j}C_{1}$.
 \end{proof}
 
\begin{remark}\label{dualityRemark}
 Proposition \ref{P:pieceofduality} implies that when \eqref{main1} holds, then
 \begin{align}\label{E:Bfknorm}
   \left\|Bf\right\|_{k_{2}}&\leq c\sup\left\{\left|(Bf,h)\right|: h\in
A^{k_{2}}(\Omega),
\sjump\|h\|_{-k_{1}}\leq 1
    \right\}\notag\\
    &=c\sup\left\{\left|(f,h)\right|: h\in A^{k_{2}}(\Omega),\sjump\|h\|_{-k_{1}}\leq 1
    \right\}
 \end{align}
 for all $f\in H^{k_{1}}(\Omega)$. It is \eqref{E:Bfknorm} that will be used in the proof
of Theorem \ref{T:main} (compare \eqref{E:Bf1norm} above).
\end{remark}

\medskip
Let $\Omega\ssubset\mathbb{C}^n=\{\rho<0\}$ be a  smoothly bounded domain and
denote by
$L_{n}:=$ \linebreak $\sum_{j=1}^{n}(\partial \rho/\partial \overline{z_{j}})(\partial /\partial
z_{j})$ the complex normal field of type $(1,0)$. Set $T_{0}= i(L_{n}-\overline{L_{n}})$.
Then $T_{0}$ is real and tangential to $b\Omega$, and it spans
(over $\mathbb{C}$) the orthogonal complement of $T^{1,0}(b\Omega) \oplus
T^{0,1}(b\Omega)$ in $T(b\Omega) \otimes \mathbb{C}$. In particular, a vector field $T$,
with coefficients in $C^{\infty}(\overline{\Omega})$, that is tangential to $b\Omega$
can be written as
\begin{equation*}
T = aT_{0} + Y_{1}+ \overline{Y_{2}},
\end{equation*}
where $a$ is a smooth complex-valued function, and
$Y_{1}$, $Y_{2}$ are of type $(1,0)$ and  tangential to $b\Omega$. The vector field $T$
is called {\it complex transversal}  if it is transversal to $T^{1,0}(b\Omega) \oplus
T^{0,1}(b\Omega)$, i.e.,  if $a$ is nowhere vanishing on $b\Omega$. Writing
$\overline{Y_{2}}$ as $a\left((1/a)\overline{Y_{2}}+(1/\overline{a})Y_{2}\right) -
(a/\overline{a})Y_{2}$, shows that
near $b\Omega$, $T$ can be written in the form
\begin{equation}\label{T-rep}
T = a\left(T_{1} + L\right),
\end{equation}
where $T_{1}$ is real and complex transversal,  $L$ is of type $(1,0)$, and both $T_{1}$ and $L$ are tangential to
$b\Omega$. 

\medskip

The Hilbert spaces $H_{T}^{k}(\Omega)$  that occur in Theorem \ref{T:main} are the usual
Sobolev spaces with respect to differentiation in the direction $T^{k}$, where the latter is the $k$-fold differentiation
with respect to $T$: $T^{k}(f)=T\left(T\left(\dots\left(Tf\right)\dots\right)\right)$. 
\begin{definition}\label{D:TSobolev}
 For $k\in\mathbb{N}$, set 
 $$H_{T}^{k}(\Omega)=\left\{f\in L^{2}(\Omega):  T^{j}f  \in L^{2}(\Omega),\;j\in\{1,\dots,k\}\right\},$$ 
 where $T^j f$ is taken in the sense of distributions. 
\end{definition}
For fixed $k$, $H_{T}^{k}(\Omega)$ is a Hilbert space with respect to the
inner product
\begin{align*}
  (f,g)_{k,T}:=\sum_{j=0}^{k}\left(T^{j}f,T^{j}g\right)\qquad\sjump\forall\sjump f,g\in 
H_{T}^{k}(\Omega),
\end{align*}
and $C^{\infty}(\overline{\Omega})$ is dense in $H_{T}^{k}(\Omega)$ with respect to the norm 
$$\|f\|_{k,T}^{2}=\sum_{j=0}^{k}\|T^{j}f\|^{2}$$
induced by this inner product.
The completeness of $H_{T}^{k}(\Omega)$ is proved in the same way as for ordinary Sobolev
spaces; density of $C^{\infty}(\overline{\Omega})$ is a standard application of
Friedrichs' Lemma (see for example \cite{ChenShaw01}, Lemma D.1 and Corollary D.2). The spaces
$H_{T}^{k}(\Omega)$, $k\geq 1$, depend on the choice of the tangential, complex transversal vector field $T$; cf. Section 5 in
\cite{HerMcN10}. However, if  $b$ is a  smooth, non-vanishing, complex-valued function,
then $H_{T}^{k}(\Omega)$ and $H_{bT}^{k}(\Omega)$ are equal,
and an inductive argument gives, e.g., the somewhat rough estimate
\begin{align}\label{E:TbTchange}
  \left\|(bT)^{k}f\right\|\leq b_{k}\|f\|_{k,T}\qquad\text{for}\sjump
b_{k}=
\max_{\overline{\Omega}, 0\leq \ell \leq k}\left\{\left|T^{\ell}b \right|^{k} ,1\right\}.
\end{align}
This implies in particular that Theorem \ref{main1} holds for $T$ if and only if it holds 
for $bT$. Moreover, \eqref{E:TbTchange} indicates how the
constant in \eqref{main2} changes (although there are other quantities that determine the
constant in \eqref{main2}).

The Fr\'{e}chet space 
$H_{T}^{\infty}(\Omega) =
\cap_{k=0}^{\infty}H_{T}^{k}(\Omega)$ is equipped with the (metrizable) topology induced by the
family of norms $\{\|.\|_{k,T} :  k \in \mathbb{N}\}$ (see e.g. \cite{Rudin91}). It
inherits completeness from the $H_{T}^{k}(\Omega)$. For a class of examples of the fact that  
$ C^{\infty}(\overline{\Omega})\subsetneq H_{T}^{\infty}(\Omega)$, see Section 5
in \cite{HerMcN10}.

\bigskip

\section{{Proof of Theorem \ref{T:main}}}\label{S:3}

As indicated in the introduction, the second step in proving Theorem \ref{T:main}  relies on a representation of a
holomorphic function $h$ in terms of $\overline{T}^{j}$-derivatives, $j\leq k$, of functions whose $L^{2}$ norms are
controlled by $\|h\|_{-k}$ for $k \in \mathbb{N}$.

\begin{proposition}\label{P:antiderivative}
Let $\Omega\ssubset\mathbb{C}^{n}$ be a smoothly bounded  domain and let $T$ be
a vector field which is tangential to $b\Omega$  and  complex transversal. Let
$k\in\mathbb{N}$.

There exists an open neighborhood $U$ of $b\Omega$, a function $\zeta\in 
C^{\infty}_{0}(\overline{\Omega}\cap U)$ which equals $1$ near $b\Omega$, and constants
$C_{j,k}>0$ for $j\in\{0,\dots,k\}$, such that for all $h\in A^{0}(\Omega)$ there exist
functions $\mathcal{H}_{j}^{k} \in H^{k}(\Omega) \cap C^{\infty}(\Omega)$,
$j\in\{0,\dots,k\}$, on $\Omega$
satisfying

\begin{itemize}
  \item[(i)] $\zeta h=\sum_{j=0}^{k}\overline{T}^{j}\mathcal{H}_{j}^{k}$ on $\Omega$,
  
  \smallskip
  
  \item[(ii)] $\|\mathcal{H}_{j}^{k}\|\leq C_{j,k}\|h\|_{-k}$ for all $j\in\{0,\dots,k\}$.
\end{itemize}
\end{proposition}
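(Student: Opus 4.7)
The plan is to localize near $b\Omega$ and construct the $\mathcal{H}_j^k$ by iterating an anti-differentiation identity peculiar to holomorphic functions. Pick a tubular neighborhood $U$ of $b\Omega$ on which $T$ has the representation \eqref{T-rep}, $T = a(T_1 + L)$, and a cutoff $\zeta \in C_0^\infty(\overline{\Omega}\cap U)$ with $\zeta \equiv 1$ in a smaller collar. Since $T_1$ is real and complex transversal, $JT_1$ is transverse to $b\Omega$; after a sign choice, its integral curves foliate a one-sided collar of $b\Omega$ in $\overline{\Omega}$. Let $\mathfrak{A}$ denote integration along these integral curves from $z$ out to the point $q(z)$ where the curve leaves the support of $\zeta$; $q(z)$ lies in a fixed compact set $K \ssubset \Omega$, and $\mathfrak{A}(JT_1 f)(z) = f(z) - f(q(z))$ for smooth $f$. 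The analytic input, to be developed in Section \ref{S:anti}, is a Hardy-type inequality $\|\mathfrak{A}(g)\| \le C \|(-r) g\|$, and its iterate $\|\mathfrak{A}^\ell(g)\| \le C_\ell \|(-r)^\ell g\|$. Combined with \eqref{E:eqnorms2}, this yields $\|\mathfrak{A}^\ell(\varphi h)\| \le C \|h\|_{-\ell}$ for every $\varphi \in C^\infty(\overline{\Omega})$ and $h \in A^0(\Omega)$.

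For $k=1$ I would carry out the computation sketched in the introduction. On $\Omega$, holomorphy gives $\overline{L} h = 0$, so from $T_1 = \overline{T}/\overline{a} - \overline{L}$ we get $T_1 h = \overline{T}h/\overline{a}$ and $JT_1 h = -i T_1 h = -i \overline{T} h / \overline{a}$. Substituting into $h = \mathfrak{A}(JT_1 h) + h\circ q$, multiplying by $\zeta$, and successively commuting $\overline{T}$ past $\mathfrak{A}$, past the factor $1/\overline{a}$, and past $\zeta$, one obtains
\begin{equation*}
\zeta h \;=\; \overline{T}\mathcal{H}_1^1 \;+\; \mathcal{H}_0^1, \qquad \mathcal{H}_1^1 \;=\; -i\,\zeta\,\mathfrak{A}(h/\overline{a}),
\end{equation*}
with $\mathcal{H}_0^1$ a finite sum of: (i) the interior term $\zeta(h\circ q)$, (ii) the commutator piece $-i\zeta\,[\mathfrak{A},\overline{T}](h/\overline{a})$, (iii) the correction $i\zeta\,\mathfrak{A}(h\,\overline{T}(1/\overline{a}))$, and (iv) the cutoff term $i(\overline{T}\zeta)\,\mathfrak{A}(h/\overline{a})$. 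Each summand has the shape $\varphi\cdot\mathfrak{A}(\psi h)$ or $\varphi\cdot(h\circ q)$, so the Hardy-plus-\eqref{E:eqnorms2} estimate, together with the interior bound $\sup_K |h| \le C\|h\|_{-1}$ (from interior regularity of holomorphic functions and $\|(-r)h\|\le \beta_1 \|h\|_{-1}$), yields $\|\mathcal{H}_j^1\| \le C\|h\|_{-1}$.

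For general $k$, iterate. Each of the four types of terms produced at the $k=1$ stage still contains a single copy of $h$ (inside a single $\mathfrak{A}$, or as the factor $h\circ q$); applying the $k=1$ move to that copy extracts one further $\overline{T}$ at the cost of producing additional terms of the same structural shape but with one more $\mathfrak{A}$ nested inside. After $k$ such iterations the result can be reorganized as $\zeta h = \sum_{j=0}^k \overline{T}^j \mathcal{H}_j^k$, with each $\mathcal{H}_j^k$ a finite sum of nested expressions of the form $\varphi_0\,\mathfrak{A}^{\ell_1}(\varphi_1\,\mathfrak{A}^{\ell_2}(\varphi_2\cdots\mathfrak{A}^{\ell_m}(\varphi_m h)\cdots))$ whose total anti-differentiation order $\ell_1+\cdots+\ell_m$ is $\ge k-j$ (plus an interior piece in $\mathcal{H}_0^k$). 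The iterated Hardy bound then controls every such term by $C\|h\|_{-k}$. Smoothness on $\Omega$ and membership in $H^k(\Omega)$ follow from the explicit integral representations and the regularizing gain of $(-r)$ in each $\mathfrak{A}$. \emph{The main obstacle} is the algebraic and combinatorial bookkeeping of the proliferating commutators $[\mathfrak{A},\overline{T}]$, $[\mathfrak{A},\varphi]$, and $[\overline{T},\zeta]$ that arise along the way: one must check that the iteration really closes, producing an output of precisely the claimed form with every piece admitting the desired estimate. Developing the necessary operator calculus for $\mathfrak{A}$ and its iterates is exactly what Section \ref{S:anti} is designed to provide.
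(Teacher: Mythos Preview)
Your strategy matches the paper's in its essentials---anti-differentiate along $JT_{1}$, use the Cauchy--Riemann identity to replace $JT_{1}h$ by $\pm i\overline{T}h$, commute $\overline{T}$ to the outside, and bound the debris via Hardy's inequality together with \eqref{E:eqnorms2}---but one structural claim is wrong and the iteration does not close as you have written it. The commutator $[\mathfrak{A},\overline{T}]$ is \emph{not} of the form $\varphi\cdot\mathfrak{A}(\psi\,\cdot\,)$: differentiating under the integral and using $\partial\varphi_{\ell}/\partial x_{j}(0,x)=\delta_{\ell j}$ shows that $[\mathfrak{A},\overline{T}]g(x)=\int_{-1}^{0}s\,\gamma_{\ell}(s,x)(\partial_{\ell}g)(\varphi(s,x))\,ds$, i.e.\ a first derivative of the input appears, compensated by an extra factor of $s$ in the kernel (this is Lemma~\ref{P:basiccommutator}). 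Hence already $\mathcal{H}_{0}^{1}$ contains $D^{\beta}h$ with $|\beta|=1$, and after $k$ iterations the terms involve $D^{\beta}h$ with $|\beta|$ up to $k$, balanced by matching powers of $s$. Your description of the output as nested $\varphi_{0}\mathfrak{A}^{\ell_{1}}(\varphi_{1}\cdots\mathfrak{A}^{\ell_{m}}(\varphi_{m}h))$ omits these derivatives, and the estimate ``total anti-differentiation order $\geq k-j$'' is not the right bookkeeping; what must be tracked is the derivative order versus the accumulated $s$-weight. This is precisely why the paper introduces the graded spaces $\mathcal{A}^{\ell}_{\alpha,\nu}$ and $\mathcal{S}^{k}_{\nu}$.

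The paper also organizes the induction differently, and more cleanly. It first normalizes to $a\equiv 1$ (so no $1/\overline{a}$ factors to commute past), and applies the fundamental theorem of calculus to $\zeta h$ rather than to $h$: since $(\mathcal{N}-i\overline{T})(\zeta h)=\bigl((\mathcal{N}-i\overline{T})\zeta\bigr)h$ is compactly supported by holomorphy, iterating gives the closed identity $\zeta h=i^{k}(\mathfrak{A}\circ\overline{T})^{k}[\zeta h]+\text{(compactly supported errors)}$ (Lemma~\ref{L:zetahformula}). Only then are all $k$ copies of $\overline{T}$ commuted to the left in one stroke via the purely algebraic identity $(A\circ X)^{\ell}=\sum_{m=0}^{\ell}X^{m}\circ G_{m}^{\ell}$ (Lemma~\ref{L:ATellstuff}). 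This separates the Cauchy--Riemann step from the commutator algebra and avoids carrying the interior term $h\circ q$ and the cutoff commutators $[\overline{T},\zeta]$ through the entire iteration.
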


\begin{remark}
For simplicity, we have only stated what we need in Proposition \ref{P:antiderivative}.
As usual in such situations, membership in $H^{k}(\Omega)$ actually comes with a norm
estimate that can be worked out with a little additional care. In fact, the arguments in
\S\ref{SS:antiderivative} can be refined to show that for some constant $C_{s,k}>0$,
$\|\mathcal{H}^{k}_{j}(h)\|_{s} \leq C_{s,k} \|h\|_{s-k}$, for $s \in \mathbb{R}$.
\end{remark}

The proof of Proposition \ref{P:antiderivative} will be given in Section \ref{S:anti}.
Assuming it is true, we now give the proof of Theorem \ref{T:main}. 
We will use the notation $A\lesssim B$ to denote the existence of a constant $C$
independent of $f$, but allowed to depend on $\Omega$ and $T$, such that $A\leq C\cdot B$.

\begin{proof}[Proof of Theorem \ref{T:main}]

Assume first
$f\in C^\infty\left(\overline{\Omega}\right)$. Then $Bf \in A^{k_{2}}(\Omega)$, by
\eqref{main1}. 
Inequality \eqref{E:Bfknorm} states that
  \begin{align}\label{E:Bfknorm1}
        \left\|Bf\right\|_{k_{2}}\leq c\sup\left\{\left|(f,h)\right|: h\in
A^{k_{2}}(\Omega),\sjump\|h\|_{-k_{1}}\leq 1 \right\}
 \end{align}
 holds for some constant $c>0$ independent of $f$.
 
We use Proposition \ref{P:antiderivative} for $k=k_{1}$ to estimate the right-hand side of
\eqref{E:Bfknorm1} as follows.
For  $T$ as in Theorem \ref{main1},  choose the neighborhood $U$ of $b\Omega$ 
and the cut-off function $\zeta$ described in Proposition \ref{P:antiderivative}.
Using the partition of unity $\{\zeta,1-\zeta\}$ yields
 \begin{align}\label{A}
     \left|\left(f,h\right)\right|&\leq\left|\left(f,\zeta h\right)\right|+\left|
\left(f,(1-\zeta)h\right)\right|\\
     &=\Bigl|\bigl(f,
\sum_{j=0}^{k_{1}}\overline{T}^{j}\mathcal{H}_{j}^{k_{1}}\bigr)\Bigr|+\left|
\left(f,(1-\zeta)h\right)\right|. \notag
 \end{align}
As $(1-\zeta)$ is identically zero near $b\Omega$, it follows from the  
Cauchy--Schwarz inequality, \eqref{E:eqnorms2}, and $\|h\|_{-k_{1}}\leq 1$  that
 \begin{align}\label{B}
   \left|\left(f,(1-\zeta)h\right)\right|\leq \widetilde{\beta}_{k}\|f\|\cdot\|h\|_{-k_{1}}\leq
   \widetilde{\beta}_{k}\|f\|,
 \end{align}
where the constant $\widetilde{\beta}_{k}>0$ depends on $\beta_{k}$ (see \eqref{E:eqnorms2}) and $\zeta$ . 
Since $T$ is tangential, integration by
parts (justified since $\mathcal{H}_{j}^{k_{1}} \in H^{k_{1}}(\Omega)$) gives
 \begin{align*}
   \left|\left( f, \overline{T}^{j}\mathcal{H}_{j}^{k_{1}}\right)\right|= \left|
\left(Tf, \overline{T}^{j-1}\mathcal{H}_{j}^{k_{1}}\right)+\left(g_{0}f,
\overline{T}^{j-1}\mathcal{H}_{j}^{k_{1}}\right)\right|,
 \end{align*}
where $g_{0}$, depending on the first order derivatives of the coefficients of $T$, is a smooth function on $\overline{\Omega}$.  
Continuing this procedure yields
 \begin{align*}
  \left|\left(f, \overline{T}^{j}\mathcal{H}_{j}^{k_{1}}\right)\right|\leq
    \sum_{\ell=0}^{j}\left\|g_{\ell}T^{\ell}f\right\|\cdot\|\mathcal{H}_{j}^{k_{1}}\|
 \end{align*}
for functions $g_{\ell}\in C^{\infty}(\overline{\Omega})$, $\ell\in\{0,\dots,j-1\}$ and 
$g_{j}=1$. By part (ii) of Proposition \ref{P:antiderivative} it now follows that
 \begin{align*}
    \left|\left( f, \overline{T}^{j}\mathcal{H}_{j}^{k_{1}}\right)\right|\leq C_{j,k}
\sum_{\ell=0}^{j}\left\|g_{\ell}T^{\ell}f\right\|.
 \end{align*}
 Summing over $j\leq k_{1}$ gives, in view of \eqref{E:Bfknorm1}, \eqref{A}, and
\eqref{B} that
\begin{align}\label{Cinfty}
    \left\|Bf\right\|_{k_{2}}\leq C\|f\|_{k_{1},T}\sjump\qquad\forall\sjump f \in
C^{\infty}(\overline{\Omega}).
\end{align}

To remove the smoothness assumption on $f$, approximate $f$ with respect to
$\|.\|_{k_{1},T}$ by functions in $C^{\infty}(\overline{\Omega})$. Then invoke
\eqref{Cinfty} and the continuity of $B$ in $L^{2}(\Omega)$ to obtain \eqref{main2}; for
more details see Lemma 4.2 in \cite{HerMcN10}.
\end{proof}


\section{{Anti-differentiation along a transverse direction}}\label{S:anti}

\subsection{General set-up}\label{SS:setup}
In this and the next two sections we work in the real setting of $\mathbb{R}^{n}$. Let
$\Omega$ be a bounded domain with smooth boundary. Denote by
$\mathcal{N}=\sum_{j=1}^{n}\mathcal{N}_{j}\frac{\partial}{\partial x_{j}}$ a vector
field with smooth coefficients in a neighborhood of $\Omega$. Suppose that $\mathcal{N}$
is transversal to $b\Omega$. Then there exists an open, bounded neighborhood
$V\subset\mathbb{R}^{n}$ of $b\Omega$ on which $\mathcal{N}$ is non-vanishing. Moreover,
for each $x\in V$ there exist a scalar $\tau_{x}>0$ and a smooth integral curve
$\varphi_{x}: (-\tau_{x},\tau_{x})\longrightarrow\mathbb{R}^{n}$ satisfying
$\varphi_{x}(0)=x$ and
\begin{align*}
  \frac{\partial\varphi_{x}}{\partial t}(t)=\Bigl\langle
\mathcal{N}_{1}\left(\varphi_{x}(t)\right),
\dots,\mathcal{N}_{n}\left(\varphi_{x}(t)\right)\Bigr\rangle
\end{align*}
for all $t\in(-\tau_{x},\tau_{x})$. Because the curve $\varphi_{x}$ intersects $b\Omega$
transversally, there exists a scalar $\tau_{0}>0$ such that $\varphi_{p}$ is defined on
$(-\tau_{0},\tau_{0})$ for all $p\in b\Omega$. After possibly rescaling $\mathcal{N}$, it
may
be assumed that $\tau=2$.

For each $x\in V$ define $t_{x}$ to be the (unique) scalar for which $\varphi(t_{x},x)\in
b\Omega$, and note that $|t_{x}|=\mathcal{O}(d_{b\Omega}(x))$.  It may be assumed that
$t_{x}>0$ for all $x\in V\cap\Omega$. Set $U:=\{x : |t_{x}|<1\}$. Then $U$ is an open neighborhood
of $b\Omega$. Moreover, the flow map, $\varphi(t,x):=\varphi_{x}(t)$, is 
a smooth map on $(-1,1)\times U$ satisfying $\varphi(0,x)=x$ as well as
\begin{align}\label{D:integralcurve}
  \frac{\partial\varphi_{\ell}}{\partial
t}(t,x)=\mathcal{N}_{\ell}\left(\varphi(t,x)\right)
\qquad\sjump\forall\sjump\ell\in\{1,\dots,n\}.
\end{align}

\medskip

We can now make precise the anti-differentiation operator $\mathfrak{A}$ from Section
\ref{intro}. Denote by $C^{\infty}_{\overline{U}}(\Omega) := \{f \in
C^{\infty}(\Omega)\,|\,f \equiv 0 \;\text{on}\; \Omega \setminus \overline{U}\}$. Then we
define $\mathfrak{A}$ as an operator from $C^{\infty}_{\overline{U}}(\Omega)$ to itself:
\begin{align*}
\mathfrak{A}[g](x)=\left\{\begin{array}{cl}
                          \int_{-1}^{0}(g\circ\varphi)(s,x)\,ds
                          &\qquad\text{if}\sjump x\in\Omega\cap U,\\
                           0  &\qquad\text{if}\sjump x \in \Omega \setminus U.
                          \end{array} \right.
\end{align*}
$\mathfrak{A}$ belongs to a class of operators denoted by $\mathcal{A}^{1}_{0,0}$ below;
see Definition \ref{def1}, where the mapping properties are also discussed.
$\mathfrak{A}$ inverts $\mathcal{N}$ in the following sense.

\begin{lemma}\label{L:FTC}
  Let $g \in C^{\infty}_{\overline{U}}(\Omega)$. Then
  \begin{align}\label{E:FTC}
    g(x)=\mathfrak{A}[\mathcal{N}g](x)\sjump, \sjump x\in\Omega.
  \end{align}
\end{lemma}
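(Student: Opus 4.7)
The plan is to reduce \eqref{E:FTC} to the fundamental theorem of calculus along the integral curves $\varphi(\cdot,x)$ of $\mathcal{N}$, and then kill the boundary contribution at $s=-1$ by a flow-based support argument. First, for $x\in\Omega\cap U$, the chain rule together with the defining relation \eqref{D:integralcurve} shows that $s\mapsto g(\varphi(s,x))$ is an antiderivative of $s\mapsto(\mathcal{N}g)(\varphi(s,x))$, so that by the definition of $\mathfrak{A}$
\begin{align*}
\mathfrak{A}[\mathcal{N}g](x) \;=\; \int_{-1}^{0}\frac{d}{ds}(g\circ\varphi)(s,x)\,ds \;=\; g(x) - g(\varphi(-1,x)).
\end{align*}

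The crux is then to show $g(\varphi(-1,x))=0$. I would argue via the flow semigroup: setting $y:=\varphi(-1,x)$, we have $\varphi(t_x+1,y)=\varphi(t_x,x)\in b\Omega$; since $t_x>0$ for $x\in\Omega\cap U$, the unique time for $y$ to reach $b\Omega$ along the $\mathcal{N}$-flow is $t_y=t_x+1>1$. By the very definition $U=\{|t_x|<1\}$, this places $y$ strictly outside $\overline{U}$, hence $g(y)=0$ by the defining vanishing condition on $C^{\infty}_{\overline{U}}(\Omega)$. This flow/support step is the only piece of the argument requiring any thought; everything else reduces to the chain rule plus the fundamental theorem of calculus.

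Finally, for $x\in\Omega\setminus U$, both sides of \eqref{E:FTC} vanish trivially: $\mathfrak{A}[\mathcal{N}g](x)=0$ by the definition of $\mathfrak{A}$, while $g(x)=0$ since $g\equiv 0$ on $\Omega\setminus\overline{U}$ (and, by continuity, on $\partial U\cap\Omega$ as well). Combining the two cases gives \eqref{E:FTC} on all of $\Omega$. The main obstacle, if one can even call it that, is the bookkeeping identity $t_y=t_x+1$, which is where the specific choice $U=\{|t_x|<1\}$ is used.
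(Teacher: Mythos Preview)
Your proof is correct and follows the same approach as the paper's: apply the chain rule and \eqref{D:integralcurve} to reduce $\mathfrak{A}[\mathcal{N}g](x)$ to the fundamental theorem of calculus along the integral curve through $x$, and then use the support condition on $g$ to kill the endpoint contribution at $s=-1$. The paper simply asserts that $\varphi(-1,x)\in\Omega\setminus U$; your semigroup computation $t_{\varphi(-1,x)}=t_x+1>1$ is exactly the justification behind that assertion.
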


\begin{proof}
When $x \in \Omega \setminus U$, both sides of \eqref{E:FTC} equal zero. When $x \in
\Omega \cap U$, then  $g\left(\varphi(-1,x)\right)=0$ because $\varphi(-1,x) \in
\Omega \setminus U$. Note that \eqref{D:integralcurve} implies 
$$\left(\left(\mathcal{N}g\right)\circ\varphi\right)(s,x) =
\frac{\partial}{\partial s}\left(\left(g\circ\varphi\right)(s,x)\right),$$ hence
an application of the Fundamental Theorem of
Calculus completes the proof of  \eqref{E:FTC}.
\end{proof}


\subsection{The spaces $\mathcal{A}_{*,*}^{*}$ and their $L^{2}$ mapping
properties.}\label{SSspaces}
In order to organize the proof of Proposition \ref{P:antiderivative}, we now define
several spaces of operators related to, but more general than $\mathfrak{A}$. We group
the operators both according to their form as well as according to their mapping
properties. 

\begin{definition}\label{def1}
(1) Denote by $\mathcal{A}_{\mu,0}^{1}$, $\mu\in\mathbb{N}_{0}$ the space of
operators acting on $C^{\infty}_{\overline{U}}$ which are of the form
  \begin{align}\label{basicHardy}
A[g](x)=\left\{\begin{array}{cl}
               \int_{-1}^{0}s^{\mu}\gamma_{A}(s,x)
               \cdot(g\circ\varphi)(s, x)\,ds &\qquad\text{if}\sjump x\in \Omega\cap U,\\
                                                           \\
               0&\qquad\text{if}\sjump x\in \Omega\setminus U
               \end{array} \right.
 \end{align}
for some $\gamma_{A}\in C^{\infty}([-1,0]\times \overline{U})$. Then $A[g] \in
C^{\infty}_{\overline{U}}(\Omega)$. 

\noindent (2) Denote by $\mathcal{A}_{\mu,\nu}^{1}$,  $\mu\in\mathbb{N}_{0}$,
$\nu\in\mathbb{N}$, the space of operators on $C^{\infty}_{\overline{U}}(\Omega)$
spanned by operators of the form $A_{\mu}\circ D^{\beta}$ for
$A_{\mu}\in\mathcal{A}_{\mu,0}^{1}$ and $|\beta|\leq\nu$.  
\end{definition}
These operators have mapping properties in weighted $L^{2}$ spaces that will be very
useful for our purposes. We introduce the following definition: 

\begin{definition}\label{D:Sjkspace}
  An operator $A$ on $C^{\infty}_{\overline{U}}(\Omega)$ is said to belong to
$\mathcal{S}_{\nu}^{k}$ if there exists a constant $C>0$ such that
\begin{align}\label{E:Sjkspace}
  \left\|t_{x}^{\ell}\cdot A[g]\right\|\leq C\sum_{|\beta|\leq
\nu}\left\|t_{x}^{\ell+k}\cdot 
D^{\beta}g\right\|\qquad\forall\sjump \ell\in\mathbb{N}_{0},\sjump g\in
C^{\infty}_{\overline{U}}(\Omega).
\end{align}  
Here, $C$ does not depend on $g$ or
$\ell$. Note that because $t_{x}$ is defined on $\Omega \cap \overline{U}$ and both $g$
and $A[g]$ vanish on $\Omega \setminus \overline{U}$, both sides of \eqref{E:Sjkspace} are
well-defined.
\end{definition}

The following lemma is the key to the mapping properties of the anti-derivative
operator $\mathfrak{A}$ and its generalizations; in particular, it makes precise the
notion of `gaining' a factor of the boundary distance. As mentioned in Section
\ref{intro}, it is a consequence of one of a group of inequalities due to Hardy
(\cite{HLP}, sections 9.8, 9.9). 

\begin{lemma}\label{P:Hardy}
For given $\mu\in\mathbb{N}_{0}$, define
the operator $B_{\mu}$ on $C^{\infty}_{\overline{U}}(\Omega)$ by
  \begin{align}\label{B-mu}
    B_{\mu}[g](x)=
\left\{\begin{array}{cl}
\int_{-1}^{0}(t_{\varphi(s,x)})^{\mu}\cdot\left|g(\varphi(s,x) \right|\,
ds &\qquad\text{if}\sjump x\in \Omega\cap U \\
0&\qquad\text{if}\sjump x \in \Omega \setminus U.
      \end{array}\right.
\end{align}
  Then $B_{\mu}\in\mathcal{S}_{0}^{\mu+1}$.
\end{lemma}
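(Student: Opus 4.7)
The plan is to reduce the estimate \eqref{E:Sjkspace} (with $A=B_\mu$, $k=\mu+1$, $\nu=0$) to a classical one-dimensional Hardy inequality by passing to flow coordinates near $b\Omega$. The gain of $t_x^{\mu+1}$ splits naturally: the explicit factor $(t_{\varphi(s,x)})^\mu$ inside the integrand supplies $\mu$ powers of the boundary distance, while the remaining single power is the usual $L^2$ gain obtained from anti-differentiation against a boundary-distance weight.

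First, I would introduce flow coordinates. Because $\mathcal{N}$ is transverse to $b\Omega$, the map $\Phi(t,p):=\varphi(-t,p)$ is a diffeomorphism from $[0,1)\times b\Omega$ (shrinking if needed) onto $\overline{\Omega}\cap\overline{U}$, with smooth Jacobian $J(t,p)$ bounded above and below by positive constants. By construction $t_{\Phi(t,p)}=t$, so the abstractly defined boundary-distance function $t_x$ is identified with the flow-parameter coordinate. Using the group law $\varphi(s,\Phi(t,p))=\varphi(s-t,p)$ together with the identity $t_{\varphi(s,x)}=t_x-s$ (valid for $s\in[-1,0]$), the substitution $v=t-s$ transforms the integral defining $B_\mu$ into
\begin{equation*}
B_\mu[g]\bigl(\Phi(t,p)\bigr)=\int_t^{t+1}v^\mu\bigl|g(\varphi(-v,p))\bigr|\,dv=\int_t^1 v^\mu\,\widetilde{g}(v,p)\,dv,
\end{equation*}
where $\widetilde{g}(v,p):=|g(\varphi(-v,p))|$ and the upper limit collapses to $1$ because $g\equiv 0$ outside $\overline{U}$.

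Next, using the two-sided Jacobian bounds, \eqref{E:Sjkspace} is reduced to the one-dimensional inequality
\begin{equation*}
\int_0^1 t^{2\ell}\Bigl(\int_t^1 v^\mu\,\widetilde{g}(v,p)\,dv\Bigr)^{\!2}\,dt\leq C\int_0^1 v^{2\ell+2\mu+2}\,\widetilde{g}(v,p)^2\,dv,
\end{equation*}
uniformly in $\ell\in\mathbb{N}_0$ and $p\in b\Omega$. This I would obtain from the ``tail'' form of Hardy's inequality (\cite{HLP}, \S 9.8--9.9): for $r<0$ and $G\geq 0$,
\begin{equation*}
\int_0^\infty t^{-r-1}\Bigl(\int_t^\infty G(v)\,dv\Bigr)^{\!2}\,dt\leq\Bigl(\frac{2}{|r|}\Bigr)^{\!2}\int_0^\infty t^{1-r}\,G(t)^2\,dt.
\end{equation*}
Choosing $r=-(2\ell+1)$ and $G(v)=v^\mu\widetilde{g}(v,p)\chi_{[0,1]}(v)$ yields exactly the displayed inequality, with Hardy constant $(2/(2\ell+1))^2\leq 4$. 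Integrating over $p\in b\Omega$ and applying the Jacobian bounds in reverse completes the proof.

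The main point to watch is the $\ell$-dependence of the constant, which must be uniform by the definition of $\mathcal{S}_\nu^k$. The essential feature of this form of Hardy's inequality is that its constant scales as $|r|^{-2}$; the choice $r=-(2\ell+1)$ gives $|r|\geq 1$ for every $\ell\geq 0$, so the constant is majorized by $4$ independently of $\ell$. The remaining bookkeeping---truncation at $v=1$ from $\mathrm{supp}\,g\subset\overline{U}$, and the identification $t_x=t$ from the flow construction---is routine given the setup of \S\ref{SS:setup}.
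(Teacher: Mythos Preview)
Your proposal is correct and follows essentially the same route as the paper: both pass to flow coordinates $(\tau,p)\mapsto\varphi(-\tau,p)$ on a collar of $b\Omega$, reduce to a one-dimensional estimate via Jacobian bounds, and then apply the tail form of Hardy's inequality (Theorem~330 in \cite{HLP}) with parameter corresponding to $r=\ell$, noting that the constant $4/(2\ell+1)^{2}\leq 4$ is uniform in $\ell$. The only cosmetic difference is that you truncate the upper limit of integration to $1$ using the support condition on $g$, while the paper extends the integrand by zero and integrates to $2$ (then to $\infty$); the substance is identical.
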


\begin{proof}
It is convenient in this proof (but not later on) to rewrite $B_{\mu}$ using coordinates
$(\tau, p) \in [0,2]\times b\Omega$ on $\overline{\Omega}\cap\overline{V}$ given by
$(\tau,p) \rightarrow \varphi(-\tau,p)$. Expressing $B_{\mu}$ in these coordinates and
changing variables gives
\begin{align}\label{B-special}
B_{\mu}[g](\tau,p)= \int_{\tau}^{\tau +1}\sigma^{\mu}\left|g(\sigma,p)\right|\,d\sigma
\;\;,\;\; (\tau,p)\sim x = \varphi(-\tau,p) \in \Omega\cap U.
\end{align}
Estimate \eqref{E:Sjkspace} now follows from the following inequality of
Hardy's (see \cite{HLP}, Theorem 330, section 9.9) for $f \geq 0$:
\begin{align}\label{Hardy}
\int_{0}^{\infty}\left(x^{r}\int_{x}^{\infty}f(t)\,dt\right)^{2}\,dx \leq
\frac{4}{(2r+1)^{2}}\int_{0}^{\infty}\left(t^{r+1}f(t)\right)^{2}\,dt \; , \; r >
-\frac{1}{2}.
\end{align}
Indeed, to verify \eqref{E:Sjkspace} for $B_{\mu}$, it suffices to observe that for $x \sim (\tau,p)
\in \Omega\cap U$
\begin{align}
\left|\tau^{\ell}B[g](\tau,p)\right| \leq
\tau^{\ell}\int_{\tau}^{2}\sigma^{\mu}\left|g(\sigma,p)\right|\,d\sigma,
\end{align}
then to apply \eqref{Hardy} with $f(t)=t^{\mu}\left|g(t,p)\right|\;,\; 0\leq t\leq 2$, and
$f(t)=0\;,\;t>2$, $r = \ell$, for $p \in b\Omega$ fixed, and lastly, to integrate over
$b\Omega$. We also use the equivalence, with a constant that depends only on $\Omega$ and
$\mathcal{N}$, of the volume elements $dV$ and $d\tau\times d_{b\Omega}$ on
$\Omega\cap\overline{V}$, where $dV$ and $d_{b\Omega}$ denote the Euclidean volume
elements on $\mathbb{R}^{n}$ and $b\Omega$, respectively. Also note that for $r=\ell \geq
0$, the constant on the right hand side of \eqref{Hardy} is less than or equal to $4$.
Replacing it by $4$ yields a constant in \eqref{E:Sjkspace} that does not
depend on $\ell$.
\end{proof}

If $A$ is of the form $A=A_{\mu}\circ D^{\beta}$, then the inequality $|s| \leq |s|+t_{x}=
t_{\varphi(s,x)}$ and the boundedness of $|\gamma_{A}|$ on $[-1,0] \times
\overline{U}$ imply
\begin{align}\label{weighted}
\|t_{x}^{\ell}A[g]\| = \|t_{x}^{\ell}A_{\mu}[D^{\beta}g]\| \lesssim
\|t_{x}^{\ell}B_{\mu}[D^{\beta}g]\| \lesssim \|t_{x}^{\ell + \mu + 1}D^{\beta}g\|,
\end{align}
where the last inequality in \eqref{weighted} follows from Lemma \ref{P:Hardy}. This proves:

\begin{lemma}\label{C:A1estimate}
  $\mathcal{A}_{\mu,\nu}^{1}\subset\mathcal{S}_{\nu}^{\mu+1}$, $\mu\,,\,\nu \in
\mathbb{N}_{0}$.
\end{lemma}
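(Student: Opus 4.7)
The plan is to verify the weighted estimate \eqref{E:Sjkspace} with $k = \mu+1$ for every $A \in \mathcal{A}_{\mu,\nu}^{1}$. Since by Definition \ref{def1} such an $A$ is a finite linear combination of compositions $A_{\mu}\circ D^{\beta}$ with $A_{\mu}\in\mathcal{A}_{\mu,0}^{1}$ and $|\beta|\leq\nu$, the triangle inequality reduces the task to handling one such composition.

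My first step would be to record the elementary flow identity $t_{\varphi(s,x)} = t_{x} + |s|$ for $x\in\Omega\cap U$ and $s\in[-1,0]$, which follows since flowing backward by time $|s|$ along $\mathcal{N}$ moves $x$ exactly $|s|$ further from $b\Omega$ along the integral curve (so that $\varphi(t_x+s,x)\in b\Omega$ means $t_{\varphi(s,x)}=t_x-s$). In particular, $|s|^{\mu}\leq t_{\varphi(s,x)}^{\mu}$. Combined with the uniform bound on $\gamma_{A}$ over the compact set $[-1,0]\times\overline{U}$, this dominates $|A_{\mu}[D^{\beta}g](x)|$ by a constant multiple of $B_{\mu}[D^{\beta}g](x)$ pointwise on $\Omega\cap U$ (and both vanish off $U$).

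The second step is to quote Lemma \ref{P:Hardy}, which states exactly that $B_{\mu}\in\mathcal{S}_{0}^{\mu+1}$, to conclude $\|t_{x}^{\ell}B_{\mu}[D^{\beta}g]\| \lesssim \|t_{x}^{\ell+\mu+1}D^{\beta}g\|$ with constant independent of $\ell$. Chaining this with the pointwise domination above, multiplying by $t_x^\ell$, taking $L^2$ norms, and summing over $|\beta|\leq\nu$ and over the finitely many summands of $A$ yields \eqref{E:Sjkspace} for $A$ with $k=\mu+1$.

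There is essentially no obstacle; the result is a direct corollary of Lemma \ref{P:Hardy}. The only point worth flagging is the uniformity of the constant in $\ell$, which has already been arranged inside Lemma \ref{P:Hardy} by replacing the sharp Hardy constant $4/(2\ell+1)^{2}$ by its uniform upper bound $4$ (valid for all $\ell\geq 0$); this is what permits the resulting constant in \eqref{E:Sjkspace} to be chosen independent of $\ell$, as the definition of $\mathcal{S}_{\nu}^{k}$ demands.
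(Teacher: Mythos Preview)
Your proposal is correct and follows exactly the same route as the paper: reduce by linearity to $A_{\mu}\circ D^{\beta}$, use the flow identity $|s|\leq |s|+t_{x}=t_{\varphi(s,x)}$ together with the boundedness of $\gamma_{A}$ to dominate $|A_{\mu}[D^{\beta}g]|$ pointwise by $B_{\mu}[D^{\beta}g]$, and then invoke Lemma \ref{P:Hardy}. The paper presents this argument inline (in the display labeled \eqref{weighted}) immediately before stating the lemma.
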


We will also need notation for compositions of operators:

 \begin{definition}  
  \noindent(1) For a multi-index
$\alpha=(\alpha_{1},\dots,\alpha_{\ell})\in\mathbb{N}_{0}^{\ell}$, 
$\ell\in\mathbb{N}$; define 
  \begin{align*}
     \mathcal{A}_{\alpha,0}^{\ell}=\left<A_{1}\circ\dots\circ A_{\ell} : 
A_{j}\in\mathcal{A}_{\alpha_{j},0}^{1}\right\rangle \; .
  \end{align*}
  
\noindent(2) Denote by $\mathcal{A}_{\alpha,\nu}^{\ell}$,
$\alpha\in\mathbb{N}_{0}^{\ell}$, 
$\nu\in\mathbb{N}$ and $\ell\in\mathbb{N}$, the space of operators on
$C^{\infty}_{\overline{U}}(\Omega)$ spanned by operators of the form
$A_{\alpha}^{\ell}\circ D^{\beta}$ for $A_{\alpha}^{\ell}\in\mathcal{A}_{\alpha,0}^{\ell}$
and $|\beta|\leq\nu$. 
\end{definition}

With Lemma \ref{C:A1estimate} above, we have the following weighted mapping properties
of these operators:
\begin{lemma}\label{C:Aellestimate}
  $\mathcal{A}_{\alpha,\nu}^{\ell}\subset\mathcal{S}_{\nu}^{\ell+|\alpha|}$, where 
$|\alpha|=\sum_{j=1}^{\ell}\alpha_{j}$.
\end{lemma}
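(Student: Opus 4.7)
The plan is to proceed by induction on the composition length $\ell$, reducing everything to iterated application of Lemma~\ref{C:A1estimate}. The base case $\ell=1$ is exactly the content of Lemma~\ref{C:A1estimate}.

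For the inductive step, by linearity it suffices to treat a single composition $A = A_1\circ A_2\circ\cdots\circ A_\ell\circ D^\beta$, where $A_j \in \mathcal{A}^1_{\alpha_j,0}$ and $|\beta|\leq \nu$. I would peel off the leftmost factor and set
$$A' := A_2\circ\cdots\circ A_\ell\circ D^\beta,$$
so that $A'\in\mathcal{A}^{\ell-1}_{\alpha',\nu}$ with $\alpha'=(\alpha_2,\dots,\alpha_\ell)$ and $|\alpha'|=|\alpha|-\alpha_1$. Note that $A=A_1\circ A'$.

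Fix $m\in\mathbb{N}_0$. Since $A_1\in\mathcal{A}^1_{\alpha_1,0}\subset\mathcal{S}^{\alpha_1+1}_0$ by Lemma~\ref{C:A1estimate}, and the exponent index $\nu$ there is $0$ (so no derivatives appear on the right-hand side of \eqref{E:Sjkspace}), applied to the function $F:=A'[g]$ it gives
$$\|t_x^{m} A[g]\| \;=\; \|t_x^{m} A_1[A'[g]]\| \;\lesssim\; \|t_x^{\,m+\alpha_1+1} A'[g]\|.$$
Next, the inductive hypothesis places $A'$ in $\mathcal{S}^{\ell-1+|\alpha'|}_{\nu}$; applying its defining estimate with the weight exponent $m+\alpha_1+1$ in place of $\ell$ in \eqref{E:Sjkspace} yields
$$\|t_x^{\,m+\alpha_1+1} A'[g]\| \;\lesssim\; \sum_{|\beta'|\leq \nu} \|t_x^{\,m+\alpha_1+1+(\ell-1)+|\alpha'|}\,D^{\beta'}g\| \;=\; \sum_{|\beta'|\leq \nu} \|t_x^{\,m+\ell+|\alpha|}\,D^{\beta'}g\|,$$
where we used $\alpha_1+|\alpha'|=|\alpha|$. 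Chaining the two estimates gives the required bound, hence $A\in\mathcal{S}^{\ell+|\alpha|}_\nu$.

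There is no real obstacle; the proof is a clean bookkeeping exercise once Lemma~\ref{C:A1estimate} is available. The key structural point that makes the induction trivial is that the derivative factor $D^\beta$ sits at the innermost position of every composition in $\mathcal{A}^\ell_{\alpha,\nu}$, so it never needs to be commuted through any of the integral operators $A_j$: it remains untouched until the inductive hypothesis is invoked, which is why the same $\nu$ bound on the derivatives appears on both sides of the conclusion.
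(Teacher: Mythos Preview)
Your proof is correct and is exactly the natural induction the paper has in mind; the paper itself omits the proof, stating the lemma as an immediate consequence of Lemma~\ref{C:A1estimate} once the composition spaces $\mathcal{A}^{\ell}_{\alpha,\nu}$ are defined. Your observation that the constant in \eqref{E:Sjkspace} is uniform in the weight exponent is precisely what allows the two estimates to be chained, and your remark that $D^{\beta}$ sits at the innermost position (so no commutation is needed) correctly identifies why the argument is purely iterative.
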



\subsection{On the algebra of commutators of $\mathcal{A}_{*,*}^{*}$ and differential
operators}\label{SScommutators}

Throughout this subsection, $X = \sum_{j=1}^{n}X_{j}\frac{\partial}{\partial x_{j}}$
denotes a vector field with smooth coefficients on (a neighborhood of)
$\overline{\Omega}$. We collect a series of elementary lemmas that give control over
various commutators involving the operators introduced in the previous subsection and $X$
or $D^{\beta}$. The notation $+$, or $\sum$, will be used to indicate sums of operators in
the indicated spaces.

\begin{lemma}\label{P:basiccommutator}
 Let $A\in\mathcal{A}_{\mu,0}^{1}$ for some $\mu\in\mathbb{N}_{0}$. Then
  \begin{align}\label{E:basiccommutator}  
    [A,X]\in \mathcal{A}_{\mu,0}^{1} + \mathcal{A}_{\mu+1,1}^{1}.
  \end{align}  
\end{lemma}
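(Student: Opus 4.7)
The plan is to compute $[A,X]g = A(Xg) - X(A[g])$ directly from the integral representation of $A$, organize the resulting terms by the order of derivative falling on $g$, and exhibit a cancellation at $s=0$ that produces the needed extra factor of $s$. Write
\begin{align*}
A[g](x) = \int_{-1}^{0} s^{\mu}\gamma_{A}(s,x)\,(g\circ\varphi)(s,x)\,ds
\qquad (x\in \Omega\cap U),
\end{align*}
and recall $X=\sum_{j}X_{j}\partial_{x_{j}}$. On $\Omega\setminus U$ both sides of \eqref{E:basiccommutator} vanish, so it suffices to work on $\Omega\cap U$.

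First I would differentiate under the integral: by the chain rule applied to $g\circ\varphi(s,\cdot)$,
\begin{align*}
X\bigl(A[g]\bigr)(x) = \int_{-1}^{0}\!s^{\mu}\!\left(X_{x}\gamma_{A}\right)(s,x)\,g(\varphi(s,x))\,ds
+ \sum_{k}\int_{-1}^{0}\!s^{\mu}\gamma_{A}(s,x)\!\left(\sum_{j}X_{j}(x)\partial_{x_{j}}\varphi_{k}(s,x)\right)\!(\partial_{k}g)(\varphi(s,x))\,ds,
\end{align*}
while
\begin{align*}
A(Xg)(x) = \sum_{k}\int_{-1}^{0}s^{\mu}\gamma_{A}(s,x)\,X_{k}(\varphi(s,x))\,(\partial_{k}g)(\varphi(s,x))\,ds.
\end{align*}
Subtracting, the zeroth-order-in-$g$ piece that remains is
\begin{align*}
-\int_{-1}^{0}s^{\mu}\left(X_{x}\gamma_{A}\right)(s,x)\,g(\varphi(s,x))\,ds,
\end{align*}
whose kernel $-(X_{x}\gamma_{A})(s,x)$ is smooth on $[-1,0]\times\overline{U}$; thus this term lies in $\mathcal{A}^{1}_{\mu,0}$.

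The first-order-in-$g$ pieces combine into
\begin{align*}
\sum_{k}\int_{-1}^{0}s^{\mu}\gamma_{A}(s,x)\,\Phi_{k}(s,x)\,(\partial_{k}g)(\varphi(s,x))\,ds,
\qquad
\Phi_{k}(s,x):=X_{k}(\varphi(s,x))-\sum_{j}X_{j}(x)\partial_{x_{j}}\varphi_{k}(s,x).
\end{align*}
Here is the key observation, which is the main (though mild) obstacle: at $s=0$, $\varphi(0,x)=x$ and $\partial_{x_{j}}\varphi_{k}(0,x)=\delta_{jk}$, so $\Phi_{k}(0,x)=X_{k}(x)-X_{k}(x)=0$. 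Since $\Phi_{k}\in C^{\infty}([-1,0]\times\overline{U})$, Taylor's theorem in $s$ gives a smooth $\widetilde{\Phi}_{k}$ with $\Phi_{k}(s,x)=s\widetilde{\Phi}_{k}(s,x)$. Substituting yields
\begin{align*}
\sum_{k}\int_{-1}^{0}s^{\mu+1}\bigl(\gamma_{A}(s,x)\widetilde{\Phi}_{k}(s,x)\bigr)(\partial_{k}g)(\varphi(s,x))\,ds,
\end{align*}
which is a finite sum of operators of the form $A_{\mu+1}\circ\partial_{k}$ with $A_{\mu+1}\in\mathcal{A}^{1}_{\mu+1,0}$, hence an element of $\mathcal{A}^{1}_{\mu+1,1}$.

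Adding the two contributions gives $[A,X]\in \mathcal{A}^{1}_{\mu,0}+\mathcal{A}^{1}_{\mu+1,1}$, as required. The only non-routine step is the Taylor-expansion cancellation at $s=0$; everything else is bookkeeping once the chain rule is applied carefully to $g\circ\varphi$ and to $\gamma_{A}(s,x)$ separately.
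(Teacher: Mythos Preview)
Your proof is correct and follows essentially the same approach as the paper: compute $A\circ X - X\circ A$ by differentiating under the integral, split off the term where $X$ hits $\gamma_{A}$ (giving the $\mathcal{A}^{1}_{\mu,0}$ contribution), and observe that the remaining kernel $\Phi_{k}(s,x)=X_{k}(\varphi(s,x))-\sum_{j}X_{j}(x)\partial_{x_{j}}\varphi_{k}(s,x)$ vanishes at $s=0$ because $\partial_{x_{j}}\varphi_{k}(0,x)=\delta_{jk}$, yielding the extra factor of $s$. Your write-up is in fact more explicit than the paper's, which simply notes $\partial\varphi_{j}/\partial x_{k}=\delta_{j,k}+O(s)$ and concludes.
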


\begin{proof}
 Let  $g\in C^{\infty}_{\overline{U}}(\Omega)$. Then
  \begin{align}\label{E:XAterm}  
     (X\circ A)[g](x)
=\int_{-1}^{0}s^{\mu}\Bigl(\left(X_{x}(\gamma_{A})\right)\cdot 
(g\circ\varphi)\Bigr)&(s,x)\;ds\\
     +\int_{-1}^{0}s^{\mu}&\gamma_{A}(s,x)\cdot
X_{x}\left(g\circ\varphi)(s,x)\right)\;ds.
\notag
  \end{align}
Let $A_{0}$ be the operator in $\mathcal{A}_{\mu,0}^{1}$ such that the first term on
the right hand side of \eqref{E:XAterm} equals $-A_{0}[g](x)$. Then it follows that
\begin{align}\label{commAX}
[A,X][g](x)=A_{0}[g](x)+\int_{-1}^{0}s^{\mu}\gamma_{A}(s,
x)\Bigl((Xg)\circ
\varphi-X_{x}(g\circ\varphi)\Bigr)(s,x) \;ds.
\end{align}  
$A_{0}$ belongs to $\mathcal{A}^{1}_{\mu,0}$. That the term on the right
hand side of \eqref{commAX} given by the integral belongs to $\mathcal{A}^{1}_{\mu+1,1}$
can be seen as follows. Because $\varphi(0,x) \equiv x$, $\partial \varphi_{j}/\partial
x_{k} = \delta_{j,k} + O(s)$. Therefore, the chain rule shows that
$\Bigl((Xg)\circ\varphi-X_{x}(g\circ\varphi)\Bigr)(s,x)$ is a sum of terms each of which
is of the form $s\gamma_{\beta}(s,x)D^{\beta}g(\varphi(s,x))$, where $\gamma_{\beta}$ is smooth (depending on first derivatives of $X\circ\varphi$ and second derivatives of $\varphi$) and
$|\beta|=1$. 
\end{proof}

\begin{lemma}\label{L:basiccommutator1}
 Let   
$A_{\mu,\nu}\in\mathcal{A}_{\mu,\nu}^{1}$ for some $\mu\in\mathbb{N}_{0}$,
$\nu\in\mathbb{N}$.  Then
  \begin{align}\label{E:basiccommutator1}
    [A_{\mu,\nu},X]\in\mathcal{A}_{\mu,\nu}^{1} + \mathcal{A}_{\mu+1,\nu+1}^{1}.
  \end{align}
 \end{lemma}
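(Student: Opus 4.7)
The plan is to reduce the statement to the case of a single generator $A_{\mu,\nu}=A_\mu\circ D^\beta$ with $A_\mu\in\mathcal{A}^1_{\mu,0}$ and $|\beta|\leq\nu$, and then compute the commutator by a standard add/subtract trick that reshuffles $X$ past the derivative and past $A_\mu$ separately. Concretely, I would write
\begin{align*}
[A_\mu\circ D^\beta,\,X]&=A_\mu D^\beta X-XA_\mu D^\beta\\
&=A_\mu\bigl(D^\beta X-X D^\beta\bigr)+\bigl(A_\mu X-X A_\mu\bigr)D^\beta\\
&=A_\mu\circ[D^\beta,X]+[A_\mu,X]\circ D^\beta,
\end{align*}
so the lemma reduces to identifying the type of each summand.

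For the first summand, the commutator $[D^\beta,X]$ of a vector field with smooth coefficients against a partial derivative of order $|\beta|\leq\nu$ is a differential operator of order at most $|\beta|\leq\nu$ with smooth coefficients on $\overline{\Omega}$, by the standard Leibniz computation. Composing with $A_\mu\in\mathcal{A}^1_{\mu,0}$ therefore produces an element of $\mathcal{A}^1_{\mu,\nu}$ (after absorbing the smooth coefficients of $[D^\beta,X]$ into differential operators of order $\leq\nu$ applied first).

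For the second summand, I would invoke Lemma \ref{P:basiccommutator}, which gives $[A_\mu,X]\in\mathcal{A}^1_{\mu,0}+\mathcal{A}^1_{\mu+1,1}$. Composing on the right with $D^\beta$ (with $|\beta|\leq\nu$) then yields an operator in $\mathcal{A}^1_{\mu,\nu}+\mathcal{A}^1_{\mu+1,\nu+1}$, since composing an operator in $\mathcal{A}^1_{\mu,0}$ (resp. $\mathcal{A}^1_{\mu+1,1}$) with a derivative of order $\nu$ produces an operator with differential part of total order at most $\nu$ (resp.\ $\nu+1$). Combining the two summands gives exactly $\mathcal{A}^1_{\mu,\nu}+\mathcal{A}^1_{\mu+1,\nu+1}$.

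There is no real obstacle; the only point requiring any care is the bookkeeping of differential orders when a derivative from $[D^\beta,X]$ or from the second term of Lemma \ref{P:basiccommutator} is inserted on the right of $D^\beta$. Since everything extends by linearity from generators $A_\mu\circ D^\beta$ to arbitrary $A_{\mu,\nu}\in\mathcal{A}^1_{\mu,\nu}$, the conclusion \eqref{E:basiccommutator1} follows.
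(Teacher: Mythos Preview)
Your proposal is correct and follows essentially the same route as the paper: reduce by linearity to generators $A_\mu\circ D^\beta$, decompose the commutator as $A_\mu\circ[D^\beta,X]+[A_\mu,X]\circ D^\beta$, and then invoke Lemma~\ref{P:basiccommutator} for the second summand while identifying the first as lying in $\mathcal{A}^1_{\mu,\nu}$. The only cosmetic difference is that the paper records $[D^\beta,X]$ as having order $|\beta|-1$, whereas you (correctly) bound it by $|\beta|\leq\nu$; either way the term lands in $\mathcal{A}^1_{\mu,\nu}$, so the argument is unaffected.
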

\begin{proof}
  By definition of $\mathcal{A}_{\mu,\nu}^{1}$, $A_{\mu,\nu}$ can be written as a linear 
combination of operators of the form $A_{\mu}\circ D^{\beta}$, where
$A_{\mu}\in\mathcal{A}_{\mu,0}^{1}$ and $|\beta|\leq\nu$. It suffices to show
\eqref{E:basiccommutator1} for the latter operators. For that, note that
  \begin{align*}
  [A_{\mu}\circ D^{\beta},X]&=A_{\mu}\circ D^{\beta}\circ X-X\circ A_{\mu}\circ D^{\beta}\\
  &=A_{\mu}\circ[D^{\beta},X]+A_{\mu}\circ X\circ D^{\beta}-X\circ A_{\mu}\circ D^{\beta}\\
  &=A_{\mu}\circ[D^{\beta},X]+[A_{\mu},X]\circ D^{\beta}.
\end{align*} 
Since $[D^{\beta},X]$ is a differential operator of order $|\beta|-1$,
$A_{\mu}\circ[D^{\beta},X]$ 
belongs to $\mathcal{A}_{\mu,|\beta|-1}^{1}$. Furthermore, \eqref{E:basiccommutator}
yields that $[A_{\mu},X]\circ
D^{\beta}\in\mathcal{A}_{\mu,|\beta|}^{1} + \mathcal{A}_{\mu+1,|\beta|+1}^{1}$.
\end{proof}

Lemma \ref{P:basiccommutator} extends to iterated commutators as follows. Set
$C_{X}^{\nu}(A)=\left[C_{X}^{\nu-1}(A),X\right]$ with $C_{X}^{1}(A)=[A,X]$. That is,
$C_{X}^{\nu}(A)$ equals the $\nu$-fold iterated commutator $[\cdots[A,X],X],\cdots X]$.
\begin{lemma}\label{C:basiccommutator1}
Let  $A$ be given as in Lemma \ref{P:basiccommutator}.
Then 
$C_{X}^{\nu}(A)\in \sum_{j=0}^{\nu}\mathcal{A}_{\mu+j,j}^{1}$.
\end{lemma}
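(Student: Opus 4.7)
The plan is to prove this by straightforward induction on $\nu$, using the two single-commutator results already established in Lemmas \ref{P:basiccommutator} and \ref{L:basiccommutator1}. The base case $\nu=1$ is precisely the content of Lemma \ref{P:basiccommutator}: for $A \in \mathcal{A}^{1}_{\mu,0}$, we have $C_{X}^{1}(A) = [A,X] \in \mathcal{A}^{1}_{\mu,0} + \mathcal{A}^{1}_{\mu+1,1}$, which matches the claim with $j \in \{0,1\}$.

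For the inductive step, suppose the claim holds at level $\nu-1$, so we can decompose
\[
C_{X}^{\nu-1}(A) = \sum_{j=0}^{\nu-1} B_{j}, \qquad B_{j} \in \mathcal{A}^{1}_{\mu+j,j}.
\]
Then $C_{X}^{\nu}(A) = [C_{X}^{\nu-1}(A), X] = \sum_{j=0}^{\nu-1}[B_{j},X]$, and I want to apply the appropriate single-commutator lemma to each term. For the $j=0$ summand, $B_{0} \in \mathcal{A}^{1}_{\mu,0}$, and Lemma \ref{P:basiccommutator} yields $[B_{0},X] \in \mathcal{A}^{1}_{\mu,0} + \mathcal{A}^{1}_{\mu+1,1}$. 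For each $j \in \{1,\dots,\nu-1\}$, $B_{j} \in \mathcal{A}^{1}_{\mu+j,j}$, and Lemma \ref{L:basiccommutator1} yields $[B_{j},X] \in \mathcal{A}^{1}_{\mu+j,j} + \mathcal{A}^{1}_{\mu+j+1,j+1}$. Observe that both lemmas give output of the same shape, namely a term in $\mathcal{A}^{1}_{\mu+j,j}$ plus a term in $\mathcal{A}^{1}_{\mu+j+1,j+1}$.

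Summing these contributions, each index $j^{\prime} \in \{0,1,\dots,\nu\}$ receives at most two pieces (one from the term $[B_{j^{\prime}},X]$ and one from $[B_{j^{\prime}-1},X]$, whichever are present), and both lie in $\mathcal{A}^{1}_{\mu+j^{\prime},j^{\prime}}$. Consequently
\[
C_{X}^{\nu}(A) \in \sum_{j=0}^{\nu} \mathcal{A}^{1}_{\mu+j,j},
\]
closing the induction.

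There is no real obstacle here; the work is purely bookkeeping on how the pair of indices $(\mu,\nu)$ is shifted under a single commutator with $X$, and the pattern $(\mu+j,j) \mapsto (\mu+j,j) + (\mu+j+1,j+1)$ is preserved precisely so that the total range of $j$ advances by exactly one at each iteration. The only thing to be careful about is that Lemma \ref{L:basiccommutator1} is stated with $\nu \geq 1$, so the $j=0$ case must be handled separately using Lemma \ref{P:basiccommutator}; but as noted, the conclusions coincide in form, so the induction goes through uniformly.
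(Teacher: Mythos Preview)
Your proof is correct and follows essentially the same approach as the paper: induction on $\nu$, with Lemma \ref{P:basiccommutator} as the base case and Lemma \ref{L:basiccommutator1} supplying the inductive step via $[B_{j},X]\in\mathcal{A}^{1}_{\mu+j,j}+\mathcal{A}^{1}_{\mu+j+1,j+1}$. Your explicit separation of the $j=0$ case (where Lemma \ref{L:basiccommutator1}, stated for $\nu\in\mathbb{N}$, does not literally apply) is a small extra bit of care that the paper leaves implicit.
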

\begin{proof}
The proof is done via induction on $\nu$. Note first that the case $\nu=1$
is Lemma
\ref{P:basiccommutator}.  Next suppose that
$C_{X}^{\nu-1}(A)\in\sum_{j=0}^{\nu-1}\mathcal{A}_{\mu+j,j}^{1}$ holds for some 
$\nu\in\mathbb{N}$. It follows from Lemma 
\ref{L:basiccommutator1} that  $[A_{\mu+j,j},X]\in\mathcal{A}_{\mu+j,j}^{1} +
\mathcal{A}_{\mu+j+1,j+1}^{1}$ for $A_{\mu+j,j}\in\mathcal{A}_{\mu+j,j}^{1}$, which
completes the proof.
\end{proof}

We next consider commutators with higher order derivatives.
\begin{lemma}\label{L:basiccommutator2}
If $A\in\mathcal{A}_{\mu,0}^{1}$ for some $\mu \in \mathbb{N}_{0}$, then 
$\left[A,D^{\beta}\right]\in  \mathcal{A}_{\mu,|\beta|-1}^{1}
  + \mathcal{A}_{\mu+1,|\beta|}^{1}$.
\end{lemma}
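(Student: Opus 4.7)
The plan is to prove the lemma by a direct computation: differentiate $A[g]$ under the integral sign and compare with $A \circ D^{\beta}[g]$, using the Leibniz rule together with the multivariate chain rule applied to $g(\varphi(s,x))$. Writing
\begin{align*}
(D^{\beta} \circ A)[g](x) = \sum_{\alpha \leq \beta}\binom{\beta}{\alpha}\int_{-1}^{0} s^{\mu}(D^{\alpha}_{x}\gamma_{A})(s,x)\,D^{\beta-\alpha}_{x}\bigl[g(\varphi(s,x))\bigr]\,ds,
\end{align*}
reduces everything to understanding $D^{\beta-\alpha}_{x}[g(\varphi(s,x))]$, which by the chain rule is a finite sum of terms $P_{\alpha,\eta}(s,x)\,(D^{\eta}g)(\varphi(s,x))$ with $|\eta| \leq |\beta-\alpha|$ and $P_{\alpha,\eta}$ smooth in $(s,x)$.

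The decisive observation is that $\varphi(0,\cdot) = \mathrm{id}$, so $(\partial\varphi_{j}/\partial x_{k})(s,x) = \delta_{jk} + O(s)$, and every higher-order $x$-derivative of $\varphi$ is $O(s)$, uniformly on $[-1,0]\times\overline{U}$. Consequently, only the single ``identity'' contribution of the chain-rule expansion (matching multi-indices $\eta = \beta-\alpha$ with every $\varphi$-factor a first-order Jacobian entry) produces a term that does not vanish at $s=0$; every other term is divisible by $s$. One obtains
\begin{align*}
D^{\beta-\alpha}_{x}\bigl[g(\varphi(s,x))\bigr] = (D^{\beta-\alpha}g)(\varphi(s,x)) + s\sum_{|\eta|\leq|\beta-\alpha|}\hat{\gamma}_{\alpha,\eta}(s,x)\,(D^{\eta}g)(\varphi(s,x)),
\end{align*}
with $\hat{\gamma}_{\alpha,\eta} \in C^{\infty}([-1,0]\times\overline{U})$.

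Substituting back, the $\alpha=0$ identity piece equals $A\circ D^{\beta}[g]$ exactly, and therefore cancels in $[A,D^{\beta}] = A\circ D^{\beta} - D^{\beta}\circ A$. The surviving terms split into two groups. The $\alpha > 0$ Leibniz corrections are of the form $\tilde{A}_{\alpha}\circ D^{\beta-\alpha}$ with $\tilde{A}_{\alpha}\in\mathcal{A}^{1}_{\mu,0}$ and $|\beta-\alpha|\leq|\beta|-1$, hence lie in $\mathcal{A}^{1}_{\mu,|\beta|-1}$. The chain-rule remainders carry an extra factor of $s$, so their integrand has weight $s^{\mu+1}$, and they involve at most $|\beta|$ derivatives of $g$, placing them in $\mathcal{A}^{1}_{\mu+1,|\beta|}$. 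Combining the two groups yields the claimed inclusion $[A,D^{\beta}]\in\mathcal{A}^{1}_{\mu,|\beta|-1} + \mathcal{A}^{1}_{\mu+1,|\beta|}$.

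The only real difficulty is organizing the multivariate chain-rule bookkeeping so that the ``$s$-gain'' from the vanishing of non-identity Jacobian components at $s=0$ is transparent. An alternative attempt by induction on $|\beta|$, using the factorization $D^{\beta}= D^{\beta-e_{k}}\circ D_{k}$ together with Lemma \ref{P:basiccommutator}, requires moving $D^{\beta-e_{k}}$ through operators in $\mathcal{A}^{1}_{\mu+1,1}$; this forces the $\mu$-index to drift upward at each step and exits the target space, so the direct calculation above is preferable.
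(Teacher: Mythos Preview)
Your direct computation is correct and complete: the Leibniz expansion combined with the multivariate chain rule, together with the observation that $\varphi(0,\cdot)=\mathrm{id}$ forces every non-identity Jacobian contribution to carry a factor of $s$, produces exactly the decomposition $[A,D^{\beta}]\in\mathcal{A}^{1}_{\mu,|\beta|-1}+\mathcal{A}^{1}_{\mu+1,|\beta|}$.

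The paper, however, takes precisely the inductive route you dismiss in your last paragraph. It factors $D^{\beta}=D^{\beta-\sigma}\circ D^{\sigma}$ with $|\sigma|=|\beta|-1$, applies the induction hypothesis to $[A,D^{\sigma}]$, and then commutes the single remaining derivative $D^{\beta-\sigma}$ past the resulting operators using Lemma~\ref{L:basiccommutator1} (which controls $[A_{\mu,\nu},X]$ for $A_{\mu,\nu}\in\mathcal{A}^{1}_{\mu,\nu}$ and a vector field $X$). The apparent drift in the $\mu$-index that worried you is absorbed at the end via the trivial inclusions $\mathcal{A}^{1}_{\mu+1,\nu-1}\subset\mathcal{A}^{1}_{\mu,\nu-1}$ and $\mathcal{A}^{1}_{\mu+2,\nu}\subset\mathcal{A}^{1}_{\mu+1,\nu}$ (an extra factor of $s$ can always be absorbed into $\gamma_{A}$). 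So the induction does close; your closing remark is inaccurate on this point.

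As a comparison: your argument is more self-contained, avoids Lemma~\ref{L:basiccommutator1} entirely, and makes the source of the $s$-gain explicit at the level of the Fa\`a di Bruno coefficients. The paper's argument is less computational and fits more naturally into the operator-algebra framework being developed in \S\ref{SScommutators}, reusing the earlier lemmas rather than unpacking the integral again.
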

\begin{proof}
The proof is done via induction on $|\beta|$. The case $|\beta|=1$ follows
from Lemma \ref{P:basiccommutator}. Suppose now that
$\left[A,D^{\sigma}\right]\in\mathcal{A}_{\kappa,\nu-2}^{1}\cup\mathcal{A}_{\kappa+1,\nu-1
}^{1}$ 
holds for any multi-index $\sigma$ of length $\nu-1$ for some $\nu\in\mathbb{N}$ and for
all $\kappa\in\mathbb{N}_{0}$. Let $\beta$ be a multi-index of length $\nu$. Then
$D^{\beta}$ may be written as $D^{\beta-\sigma}\circ D^{\sigma}$ for some
multi-index $\sigma$ of length $\nu-1$. It then follows that
  \begin{align*}
    [A,D^{\beta}]&=[A, D^{\beta- \sigma}]\circ D^{\sigma} + D^{\beta -
\sigma}\circ A\circ D^{\sigma} - D^{\beta - \sigma}\circ D^{\sigma}\circ A\\
    &=[A, D^{\beta - \sigma}]\circ D^{\sigma} + D^{\beta - \sigma}\circ [A,
D^{\sigma}]\,.
  \end{align*}
It follows from Lemma \ref{P:basiccommutator} and the fact that $|\beta-\sigma|=1$ that 
$\left[A,D^{\beta - \sigma}\right]\circ
D^{\sigma}\in\mathcal{A}_{\mu,|\beta|-1}^{1} +
\mathcal{A}_{\mu+1,|\beta|}^{1}$. The induction hypothesis furnishes two operators
$A_{1} \in \mathcal{A}^{1}_{\mu,\nu -2}$ and $A_{2} \in \mathcal{A}^{1}_{\mu +1,\nu -1}$
so that $\left[A,D^{\sigma}\right] = A_{1}+A_{2}$. Then 
\begin{align*}
D^{\beta - \sigma}\circ \left[A,D^{\sigma}\right] &= D^{\beta -
\sigma}\circ (A_{1}+A_{2}) \\ 
&= A_{1}\circ D^{\beta - \sigma} + A_{2}\circ D^{\beta - \sigma} +
[D^{\beta - \sigma},A_{1}] + [D^{\beta - \sigma},A_{2}]\; .
\end{align*}
The four terms on the right hand side are in, respectively, $\mathcal{A}^{1}_{\mu,\nu
-1}$, $\mathcal{A}^{1}_{\mu+1,\nu}$, $\mathcal{A}^{1}_{\mu,\nu-2} +
\mathcal{A}^{1}_{\mu+1,\nu-1}$, and $\mathcal{A}^{1}_{\mu+1,\nu-1} +
\mathcal{A}^{1}_{\mu+2,\nu}$. We have used Lemma \ref{L:basiccommutator1} for the last two
terms ($|\beta - \sigma|=1$!). Taking into account the (trivial) inclusions
$\mathcal{A}^{1}_{\mu,\nu-2} \subset \mathcal{A}^{1}_{\mu,\nu-1}$,
$\mathcal{A}^{1}_{\mu+1,\nu-1} \subset\mathcal{A}^{1}_{\mu,\nu-1}$, and
$\mathcal{A}^{1}_{\mu+2,\nu} \subset \mathcal{A}^{1}_{\mu+1,\nu}$, shows that $D^{\beta -
\sigma}\circ \left[A,D^{\sigma}\right] \in \mathcal{A}^{1}_{\mu,\nu-1} +
\mathcal{A}^{1}_{\mu+1,\nu}$. This completes the induction.
\end{proof}

For compositions of operators, Lemma \ref{L:basiccommutator2} takes on the following form.

\begin{lemma}\label{C:basiccommutator3}
 If $A_{\alpha,\nu}\in\mathcal{A}_{\alpha,\nu}^{\ell}$ for some multi-index 
$\alpha\in\mathbb{N}_{0}^{\ell}$, $\nu\,,\,\ell\in\mathbb{N}$, then 
  \begin{align}\label{E:basiccommutator3}
  [A_{\alpha,\nu},D^{\beta}]\in\mathcal{A}_{\alpha,\nu+|\beta|-1}^{\ell}
  +\sum_{j=1}^{\ell}\mathcal{A}_{\alpha+e_{j},\nu+|\beta|}^{\ell}, 
 \end{align} 
  where $e_{j}$ is the standard $j$-th unit vector.
\end{lemma}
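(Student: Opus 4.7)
The plan is to proceed by induction on the composition length $\ell$, with base case $\ell=1$ being precisely Lemma \ref{L:basiccommutator2}. Before starting the induction I would reduce to the case $\nu=0$: since $A_{\alpha,\nu}$ is, by definition, a linear combination of operators $A^\ell_\alpha \circ D^\gamma$ with $A^\ell_\alpha \in \mathcal{A}^\ell_{\alpha,0}$ and $|\gamma|\leq\nu$, and since partial derivatives commute,
\[
[A^\ell_\alpha\circ D^\gamma,\, D^\beta]\;=\;[A^\ell_\alpha,\, D^\beta]\circ D^\gamma.
\]
Thus it suffices to establish the target inclusion for $\nu=0$; composing with $D^\gamma$ afterwards adds $|\gamma|\leq \nu$ derivatives and lands in $\mathcal{A}^\ell_{\alpha,\nu+|\beta|-1}+\sum_j \mathcal{A}^\ell_{\alpha+e_j,\nu+|\beta|}$.

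For the inductive step, write $A^\ell_\alpha = A_1\circ B$ with $A_1\in\mathcal{A}^1_{\alpha_1,0}$ and $B=A_2\circ\cdots\circ A_\ell \in \mathcal{A}^{\ell-1}_{\alpha',0}$, where $\alpha'=(\alpha_2,\ldots,\alpha_\ell)$. The derivation identity gives
\[
[A_1\circ B,\,D^\beta]\;=\;A_1\circ[B,D^\beta]\,+\,[A_1,D^\beta]\circ B.
\]
For the first summand the inductive hypothesis at level $\ell-1$ places $[B,D^\beta]$ in $\mathcal{A}^{\ell-1}_{\alpha',|\beta|-1}+\sum_{j=2}^\ell\mathcal{A}^{\ell-1}_{\alpha'+e_j,|\beta|}$ (with $e_j$ denoting the unit vector in the natural $\mathbb{N}_0^{\ell-1}$ indexing, reindexed to $\{2,\dots,\ell\}$); composing on the left with $A_1\in\mathcal{A}^1_{\alpha_1,0}$ immediately lifts these to $\mathcal{A}^\ell_{\alpha,|\beta|-1}+\sum_{j=2}^\ell\mathcal{A}^\ell_{\alpha+e_j,|\beta|}$, which sits inside the target space.

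The delicate work is in the second summand. Lemma \ref{L:basiccommutator2} splits $[A_1,D^\beta]$ as a sum of operators of the form $A^1_{\alpha_1}\circ D^\delta$ with $|\delta|\leq|\beta|-1$ and $A^1_{\alpha_1+1}\circ D^{\delta'}$ with $|\delta'|\leq|\beta|$. In each piece I would move the derivative past $B$ via $D^\delta\circ B = B\circ D^\delta - [B,D^\delta]$, and handle the commutator by a \emph{second} appeal to the inductive hypothesis, which provides $[B,D^\delta]\in\mathcal{A}^{\ell-1}_{\alpha',|\delta|-1}+\sum_{j=2}^{\ell}\mathcal{A}^{\ell-1}_{\alpha'+e_j,|\delta|}$. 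Recomposing on the left with $A^1_{\alpha_1}$ produces contributions in $\mathcal{A}^\ell_{\alpha,|\beta|-1}$ and $\sum_{j=2}^\ell \mathcal{A}^\ell_{\alpha+e_j,|\beta|-1}$; recomposing with $A^1_{\alpha_1+1}$ produces the $e_1$-index-raised terms in $\mathcal{A}^\ell_{\alpha+e_1,|\beta|}$ that are needed to round out the sum over $j=1,\ldots,\ell$.

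The main obstacle is purely book-keeping: the combination of $A^1_{\alpha_1+1}$ with the $\mathcal{A}^{\ell-1}_{\alpha'+e_j,|\delta'|}$ error from $[B,D^{\delta'}]$ produces operators in $\mathcal{A}^\ell_{\alpha+e_1+e_{j+1},|\beta|}$ with \textbf{two} raised indices, which are not literally of the form $\alpha+e_k$ required by the target. This is resolved by the elementary monotonicity $\mathcal{A}^1_{\mu+1,0}\subseteq\mathcal{A}^1_{\mu,0}$, which follows immediately from the defining formula \eqref{basicHardy} (write $s^{\mu+1}\gamma_A(s,x)=s^\mu\cdot\bigl(s\,\gamma_A(s,x)\bigr)$ with $s\gamma_A\in C^\infty([-1,0]\times\overline{U})$). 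Taking tensor products, this implies $\mathcal{A}^\ell_{\tilde\beta,\cdot}\subseteq\mathcal{A}^\ell_{\tilde\beta',\cdot}$ whenever $\tilde\beta\geq\tilde\beta'$ componentwise, so the doubly-raised spaces collapse into singly-raised ones and the induction closes.
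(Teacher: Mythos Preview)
Your proof is correct and follows essentially the same route as the paper: first reduce to $\nu=0$ via the commutativity $[A^\ell_\alpha\circ D^\gamma,D^\beta]=[A^\ell_\alpha,D^\beta]\circ D^\gamma$, then induct on $\ell$ with base case Lemma~\ref{L:basiccommutator2}, splitting $A^\ell_\alpha=A_1\circ B$ and applying the derivation identity for commutators. The paper merely says the induction step is ``analogous to that in the proof of Lemma~\ref{L:basiccommutator2}, with the multi-index $\alpha$ now playing the role of $\beta$ there'' and leaves the details to the reader; you have spelled them out, including the doubly-raised-index issue and its resolution via the monotonicity $\mathcal{A}^1_{\mu+1,0}\subseteq\mathcal{A}^1_{\mu,0}$ (the same inclusion the paper uses explicitly at the end of the proof of Lemma~\ref{L:basiccommutator2}).
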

\begin{proof}
We first consider the case $\nu=0$; in this case, the proof is by induction on $\ell$. The
case $\ell=1$ is Lemma \ref{L:basiccommutator2}. The induction step is analogous to that
in
the proof of Lemma \ref{L:basiccommutator2}, with the multi-index $\alpha$ now playing the
role of $\beta$ there. We leave the details to the reader.

When $\nu>0$, $A_{\alpha,\nu}$ is a linear combination of operators of the form
$A_{\alpha,0}\circ D^{\gamma}$ with $|\gamma| \leq \nu$. Thus $[A_{\alpha,\nu},
D^{\beta}]$ is a linear combination of terms of the form
\begin{align*}
[A_{\alpha,0}\circ D^{\gamma}, D^{\beta}] =
A_{\alpha,0}\circ D^{\gamma}\circ D^{\beta} - D^{\beta}\circ A_{\alpha,0}\circ D^{\gamma}
= [A_{\alpha,0}, D^{\beta}]\circ D^{\gamma}
\end{align*}
because $D^{\gamma}\circ D^{\beta} = D^{\beta}\circ D^{\gamma}$.
\eqref{E:basiccommutator3} now follows from the case $\nu=0$ (already shown) applied to
$[A_{\alpha,0}, D^{\beta}]$.
\end{proof}


\subsection{Proof of Proposition \ref{P:antiderivative}}\label{SS:antiderivative}

Let $\Omega$ and $T$ be as in Proposition \ref{P:antiderivative}. Near $b\Omega$,
$T=a(T_{1}+L)$, with both $T_{1}$ and $L$ tangential, $T_{1}$ real, $L$ of type $(1,0)$,
and $a$ a smooth function that does not vanish near $b\Omega$ (see \eqref{T-rep}). It is
easy to see that the conclusion of Proposition \ref{P:antiderivative} holds for $T$ if and
only if it holds for $T_{1}+L$ (however,  the $\mathcal{H}_{j}^{k}$'s in (i) change and so
do the constants $C_{j,k}$ in (ii), see also the short discussion surrounding
\eqref{E:TbTchange}). Therefore, it may be assume that $T=T_{1}+L$, with $T_{1}$ and $L$
as above. Set $\mathcal{N} := JT_{1}$. Because $T_{1}$ is complex transversal,
$\mathcal{N}$ is transversal to $b\Omega$, and so the general set-up of
\S\ref{SS:setup} applies.

The proof of Proposition \ref{P:antiderivative} will be achieved in two steps. The first
step consists in replacing $\mathcal{N}$ in Lemma \ref{L:FTC} by $i\overline{T}$, adding
the necessary correction, and then iterating the result. The key for the proof of
Proposition \ref{P:antiderivative} is that when applied to a holomorphic function, the
correction term is benign, as a result of the Cauchy--Riemann equations, see
\eqref{holbenign} below.

\begin{lemma}\label{L:zetahformula}
Let $h\in C^{\infty}(\Omega)$ for some $k\in\mathbb{N}$. Let $\zeta\in
C^{\infty}_{0}(\overline{\Omega}\cap U)$ a non-negative function which
is identically $1$ in a neighborhood of $b\Omega$ contained in $U$. Then
  \begin{align}\label{E:zetahformula}
    \left(\zeta h\right)(x)=i^{k}\left(\mathfrak{A}\circ\overline{T}\right)^{k}[\zeta h]+
\sum_{j=0}^{k-1}i^{j}\left(\mathfrak{A}\circ\overline{T}
\right)^{j}\circ\mathfrak{A}\circ(\mathcal{N}-i\overline{T})[\zeta h].
  \end{align}
\end{lemma}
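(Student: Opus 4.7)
The plan is to prove the identity by induction on $k$, using Lemma \ref{L:FTC} as the base and iterating via a telescoping decomposition of $\mathcal{N}$.

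First I would verify the set-up: the product $\zeta h$ lies in $C^{\infty}_{\overline{U}}(\Omega)$. Indeed, $\zeta$ is smooth with $\mathrm{supp}\,\zeta \subset \overline{\Omega} \cap U$ and $h \in C^{\infty}(\Omega)$, so $\zeta h$ is smooth on $\Omega$ and vanishes on $\Omega \setminus \overline{U}$. Moreover, since $\overline{T}$ is a first-order differential operator with smooth coefficients, it preserves $C^{\infty}_{\overline{U}}(\Omega)$, and by Definition \ref{def1}(1) so does $\mathfrak{A}$; hence every composition of $\mathfrak{A}$ and $\overline{T}$ that appears in \eqref{E:zetahformula} makes sense and takes $\zeta h$ to an element of $C^{\infty}_{\overline{U}}(\Omega)$. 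The operator $\mathcal{N} - i\overline{T}$ is also a first-order differential operator with smooth coefficients and therefore preserves $C^{\infty}_{\overline{U}}(\Omega)$.

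For the base case $k=1$, apply Lemma \ref{L:FTC} to $g = \zeta h$ to obtain $\zeta h = \mathfrak{A}[\mathcal{N}(\zeta h)]$. Then write $\mathcal{N} = i\overline{T} + (\mathcal{N} - i\overline{T})$ and use linearity of $\mathfrak{A}$:
\begin{equation*}
\zeta h = i\,\mathfrak{A}\circ\overline{T}[\zeta h] + \mathfrak{A}\circ(\mathcal{N} - i\overline{T})[\zeta h],
\end{equation*}
which is exactly \eqref{E:zetahformula} for $k=1$.

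For the inductive step, assume \eqref{E:zetahformula} holds for some $k \geq 1$. Substitute the base-case identity into the innermost occurrence of $\zeta h$ inside the leading term $i^{k}(\mathfrak{A}\circ\overline{T})^{k}[\zeta h]$: applying $(\mathfrak{A}\circ\overline{T})^{k}$ to both sides of the $k=1$ formula gives
\begin{equation*}
(\mathfrak{A}\circ\overline{T})^{k}[\zeta h] = i\,(\mathfrak{A}\circ\overline{T})^{k+1}[\zeta h] + (\mathfrak{A}\circ\overline{T})^{k}\circ\mathfrak{A}\circ(\mathcal{N}-i\overline{T})[\zeta h].
\end{equation*}
Multiplying by $i^{k}$ and inserting this back into the inductive hypothesis collapses to
\begin{equation*}
\zeta h = i^{k+1}(\mathfrak{A}\circ\overline{T})^{k+1}[\zeta h] + \sum_{j=0}^{k} i^{j}(\mathfrak{A}\circ\overline{T})^{j}\circ\mathfrak{A}\circ(\mathcal{N}-i\overline{T})[\zeta h],
\end{equation*}
which completes the induction.

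There is no genuine obstacle here: the argument is purely algebraic, given Lemma \ref{L:FTC} and the fact that $\mathfrak{A}$, $\overline{T}$, and $\mathcal{N} - i\overline{T}$ all act on $C^{\infty}_{\overline{U}}(\Omega)$. The only point requiring care is that the formal manipulations make sense on the correct function space; the analytic content — namely that the correction term $\mathfrak{A}\circ(\mathcal{N}-i\overline{T})$ is benign — is not used in this lemma and will be exploited only afterward, when Cauchy--Riemann is invoked to handle $(\mathcal{N}-i\overline{T})h$ on the holomorphic $h$.
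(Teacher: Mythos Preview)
Your proof is correct and follows essentially the same route as the paper: both argue by induction on $k$, establishing the base case $k=1$ from Lemma \ref{L:FTC} together with the splitting $\mathcal{N}=i\overline{T}+(\mathcal{N}-i\overline{T})$, and then iterate by substituting the $k=1$ identity into the leading term of the inductive hypothesis. Your additional remarks verifying that $\zeta h\in C^{\infty}_{\overline{U}}(\Omega)$ and that the relevant operators preserve this space are a welcome clarification that the paper leaves implicit.
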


\begin{proof}
  The proof is done by induction on $k$. Suppose first that $k=1$. Then Lemma \ref{L:FTC}
yields
  \begin{align}\label{E:ATk=1}
    \zeta h=\mathfrak{A}[\mathcal{N}(\zeta h)]&=\mathfrak{A}[i\overline{T}(\zeta h)]+
\mathfrak{A}[(\mathcal{N}-i\overline{T})(\zeta h)]\notag\\
    &=i\left(\mathfrak{A}\circ\overline{T}\right)[\zeta h]+\mathfrak{A}\circ
(\mathcal{N}-i\overline{T})[\zeta h] \; .
  \end{align}
For the induction step suppose that
  \begin{align}\label{E:ATkstep}
    \zeta h=i^{k-1}\left(\mathfrak{A}\circ\overline{T}\right)^{k-1}[\zeta h]+
\sum_{j=0}^{k-2}i^{j}\left(\mathfrak{A}\circ\overline{T} \right)^{j}\circ\mathfrak{A}
     \circ(\mathcal{N}-i\overline{T})[\zeta h]
 \end{align}
 holds. Using identity \eqref{E:ATk=1} to replace  $\zeta h$ in the first term
of the right hand side of \eqref{E:ATkstep} gives the result.
\end{proof}

In the second step, we will write the powers
$\left(\mathfrak{A}\circ\overline{T}\right)^{\ell}$ in terms of compositions of the form
$\left(\overline{T}\right)^{m}\circ \widetilde{\mathfrak{A}}$ with good control of $\widetilde{\mathfrak{A}}$. This is accomplished in
the following lemma; its proof relies heavily on the machinery of 
\S\ref{SSspaces} and \S\ref{SScommutators}.

\begin{lemma}\label{L:ATellstuff}
Let $A\in\mathcal{A}_{0}^{1}$, and $X$ a vector field with smooth coefficients on
$\overline{\Omega}$. Then, for any $\ell\in\mathbb{N}$, there exist operators
$G_{m}^{\ell}$, $m\in\{0,\dots,\ell\}$, which belong to 
  $\sum_{\left\{\nu\leq\ell-m, |\alpha|-\nu\geq 0
\right\}}\mathcal{A}_{\alpha,\nu}^{\ell}$ 
such that
  \begin{align*}
    \left(A\circ X \right)^{\ell}=\sum_{m=0}^{\ell}X^{m}\circ G_{m}^{\ell}.
  \end{align*}
\end{lemma}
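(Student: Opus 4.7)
I would prove the lemma by induction on $\ell$. The base case $\ell = 1$ is immediate: writing $A \circ X = X \circ A + [A, X]$, set $G_1^1 := A \in \mathcal{A}_{0,0}^1$ and $G_0^1 := [A, X]$, which lies in $\mathcal{A}_{0,0}^1 + \mathcal{A}_{1,1}^1$ by Lemma \ref{P:basiccommutator}, with $|\alpha| - \nu = 0$ in both pieces.

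For the induction step, write $(A \circ X)^\ell = (A \circ X)^{\ell - 1} \circ (A \circ X)$, substitute the decomposition from the inductive hypothesis, and expand each resulting term using the identity
\begin{equation*}
G_m^{\ell-1} \circ A \circ X \;=\; X \circ G_m^{\ell-1} \circ A \;+\; [G_m^{\ell-1}, X] \circ A \;+\; G_m^{\ell-1} \circ [A, X],
\end{equation*}
which is obtained by first rewriting $A \circ X = X \circ A + [A, X]$ and then commuting $X$ past $G_m^{\ell-1}$. Collecting powers of $X$ on the left, the operator $G_k^\ell$ is exhibited as a finite sum of terms of three kinds, namely $G_{k-1}^{\ell-1} \circ A$, $[G_k^{\ell-1}, X] \circ A$, and $G_k^{\ell-1} \circ [A, X]$ (with the convention that out-of-range indices produce vanishing contributions).

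Verifying that these terms lie in the target space $\sum_{\{\nu \leq \ell - k,\, |\alpha| \geq \nu\}} \mathcal{A}_{\alpha, \nu}^\ell$ reduces to three closure statements for the family $\mathcal{B}^p_j := \sum_{\{\nu \leq j,\, |\alpha| \geq \nu\}} \mathcal{A}_{\alpha, \nu}^p$: first, $\mathcal{B}^{\ell-1}_j \circ A \subseteq \mathcal{B}^\ell_j$, proved by moving the trailing $D^\beta$ past $A$ via Lemma \ref{L:basiccommutator2} and attaching $A$ as the new $\alpha_\ell = 0$ slot; second, $[\mathcal{B}^{\ell-1}_j, X] \subseteq \mathcal{B}^{\ell-1}_{j+1}$, an extension of Lemma \ref{L:basiccommutator1} to compositions of $\ell - 1$ factors via the Leibniz formula $[A_1 \circ \cdots \circ A_{\ell-1}, X] = \sum_i A_1 \circ \cdots \circ [A_i, X] \circ \cdots \circ A_{\ell-1}$ in combination with Lemma \ref{P:basiccommutator} and Lemma \ref{C:basiccommutator3} (the latter used to absorb into the tail any derivative that Lemma \ref{P:basiccommutator} inserts in the middle of the chain); third, $\mathcal{B}^{\ell-1}_j \circ [A, X] \subseteq \mathcal{B}^\ell_{j+1}$, by splitting $[A, X] = C_0 + C_1$ with $C_0 \in \mathcal{A}_{0,0}^1$ and $C_1 \in \mathcal{A}_{1,1}^1$ and applying the first closure statement to $C_0$ together with a small variant for $C_1$ (where one extra derivative is exactly compensated by the extra Hardy factor).

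The main obstacle is the combinatorial bookkeeping of index pairs $(\alpha, \nu)$ across these manipulations. The structural fact that keeps things manageable is that every primitive commutator produced by Lemmas \ref{P:basiccommutator}, \ref{L:basiccommutator1}, and \ref{L:basiccommutator2} either leaves $(|\alpha|, \nu)$ unchanged or increments both by $1$ in tandem; consequently, the surplus $|\alpha| - \nu$ is non-decreasing under commutation, and the invariant $|\alpha| \geq \nu$ is preserved throughout the induction. The bound $\nu \leq \ell - k$ is similarly tracked by noting that each of the three contributions to $G_k^\ell$ raises the derivative budget by at most one relative to the inductive bound $\nu \leq \ell - 1 - k$ available for $G_{k-1}^{\ell-1}$ or $G_k^{\ell-1}$.
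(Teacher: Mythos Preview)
Your argument is correct and arrives at the same conclusion, but the induction is organized differently from the paper's. The paper attaches the new factor on the \emph{left}, writing $(A\circ X)^{\ell}=A\circ X\circ(A\circ X)^{\ell-1}=\sum_{m} A\circ X^{m+1}\circ G_{m}^{\ell-1}$, so that the inductive operators $G_{m}^{\ell-1}$ sit untouched on the far right; the only commutator to analyze is $[A,X^{m}]$, which is expanded via the iterated-commutator identity \eqref{commexpand} and handled by Lemma~\ref{C:basiccommutator1}. You instead attach the new factor on the \emph{right}, so your commutators involve the composite operators $G_{m}^{\ell-1}$ themselves; this is why you need the closure statement $[\mathcal{B}^{\ell-1}_{j},X]\subseteq\mathcal{B}^{\ell-1}_{j+1}$ and must invoke Lemma~\ref{C:basiccommutator3} to sweep to the right the stray first-order derivative that Lemma~\ref{P:basiccommutator} deposits in the middle of the chain. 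The paper's route is somewhat cleaner in that it avoids ever commuting through a long composition, trading that for the explicit binomial expansion of $[A,X^{m}]$; your route, on the other hand, isolates the structural invariant (the surplus $|\alpha|-\nu$ never decreases under any of the basic moves) more transparently. Either way the bookkeeping is of the same order, and both proofs rest on the same primitive commutator Lemmas~\ref{P:basiccommutator}--\ref{C:basiccommutator3}.
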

\begin{proof}
The proof is again by induction on $\ell$. For $\ell=1$, commuting $A$ by $X$ yields
  \begin{align*}
    A\circ X=X\circ A+\left[A,X\right]=:X\circ G_{1}^{1}+G^{1}_{0}.
  \end{align*} 
It follows from the definition of $A$ that $G_{1}^{1}=A\in\mathcal{A}_{0,0}^{1}$.
Furthermore, Lemma \ref{P:basiccommutator} gives that $G_{0}^{1}$ belongs to
$\mathcal{A}_{0,0}^{1} + \mathcal{A}_{1,1}^{1}$.
      
For the induction step suppose that
  \begin{align*}
    \left(A\circ X \right)^{\ell-1}=\sum_{m=0}^{\ell-1}X^{m}\circ G_{m}^{\ell-1}
  \end{align*}
 holds for some $G_{m}^{\ell-1}\in \sum_{\left\{\nu\leq\ell-1-m, |\alpha|-\nu\geq 0
\right\}}\mathcal{A}_{\alpha,\nu}^{\ell-1}$.
 Then
  \begin{align}\label{E:ATellstuff}
     \left(A\circ X \right)^{\ell}&=\sum_{m=0}^{\ell-1}A\circ X^{m+1}\circ G_{m}^{\ell-1}
     =\sum_{m=1}^{\ell}A\circ X^{m}\circ G_{m-1}^{\ell-1}\notag\\
     &=\sum_{m=1}^{\ell}\left(X^{m}\circ A\circ
G_{m-1}^{\ell-1}+\left[A,X^{m}\right]\circ 
G_{m-1}^{\ell-1} 
     \right).
  \end{align}
 Since $A\in\mathcal{A}_{0,0}^{1}$, it follows that
  \begin{align*}
  A\circ G_{m-1}^{\ell-1}\in\sum_{\left\{\nu\leq\ell-m, 
|\alpha|-\nu\geq 0 \right\}} \mathcal{A}_{\alpha,\nu}^{\ell}\,.
 \end{align*}
To deal with the second term on the right-hand side of \eqref{E:ATellstuff}, we use the
 formula
 \begin{align}\label{commexpand}
   \left[A,X^{m} \right]=
   \sum_{j=0}^{m-1}{m \choose j} \,X^{j}\circ C_{X}^{m-j}(A),
  \end{align} 
where the iterated commutators $C_{*}^{*}$ are defined just before Lemma
\ref{C:basiccommutator1}. (\eqref{commexpand} is purely algebraic and is easily proved by
induction on $m$; alternatively, see \cite{DerridjTartakoff76}, Lemma 2 or
\cite{Straube10}, formula (3.54).) What must be shown then, is that
   \begin{align*}
     C_{X}^{m-j}(A)\circ G_{m-1}^{\ell-1}
     \in \sum_{\left\{\nu\leq\ell-j, |\alpha|-\nu\geq 0
\right\}}\mathcal{A}_{\alpha,\nu}^{\ell}\; .
   \end{align*}
For that, recall first that Lemma \ref{C:basiccommutator1} says that
$C_{X}^{m-j}(A)\in\sum_{k=0}^{m-j}\mathcal{A}_{k,k}^{1}$. Furthermore, it follows from
Lemma \ref{C:basiccommutator3} that for $|\beta|\leq k$
   \begin{align*}
     D^{\beta}\circ G_{m-1}^{\ell-1}=G_{m-1}^{\ell-1}\circ D^{\beta}+\left[D^{\beta},
G_{m-1}^{\ell-1}\right]
     \in\sum_{\left\{\nu\leq\ell-m, |\alpha|-\nu\geq
0 \right\}}\mathcal{A}_{\alpha,\nu+k}^{\ell-1}\;.
   \end{align*}
  It then follows that $C_{X}^{m-j}(A)\circ G_{m-1}^{\ell-1}$ is contained in
\begin{align*}
\sum_{\left\{\nu\leq\ell-m, |\alpha|-\nu\geq
0\,,\,0\leq k\leq m-j \right\}}\mathcal{A}^{\ell}_{(k,\alpha),\nu+k} \; .
\end{align*}   
Set $\widetilde{\nu}=\nu+k$, then $\nu\leq\ell-m$ implies that $\widetilde{\nu}\leq
\ell-j$, since $k\leq m-j$. Moreover, setting  $\widetilde{\alpha}=(k,\alpha)$ yields
$|\widetilde{\alpha}|-\widetilde{\nu}=|\alpha|-\nu\geq 0$. Hence
   \begin{align*}
     C_{X}^{m-j}(A)\circ G_{m-1}^{\ell-1}\in
     \sum_{\left\{\widetilde{\nu}\leq\ell-j, |\widetilde{\alpha}|-\widetilde{\nu}\geq 0
\right\}}\mathcal{A}^{\ell}_{\widetilde{\alpha},\widetilde{\nu}} \;,
   \end{align*}
which completes the proof.
\end{proof}

We are now ready to prove Proposition \ref{P:antiderivative}. This amounts to combining
Lemmas \ref{L:zetahformula} and \ref{L:ATellstuff},  to obtain a representation (i) with
$\mathcal{H}^{k}_{j} \in C^{\infty}(\Omega)$; this works for $h \in C^{\infty}(\Omega)$.
The final step then consists in obtaining the required estimates (ii) and membership
in $H^{k}(\Omega)$ when $h$ is holomorphic. 

\begin{proof}[Proof of Proposition \ref{P:antiderivative}]
For $h$ and $\zeta$ given as  in Lemma \ref{L:zetahformula}, use Lemmas
\ref{L:zetahformula} and \ref{L:ATellstuff} (with $\mathfrak{A}$ and $\overline{T}$ in
place of $A$ and $X$, respectively) in
\eqref{E:zetahformula} to obtain that 
$$\zeta h=\sum_{m=0}^{k}\overline{T}^{m}\mathcal{H}_{m}^{k},$$ 
where
\begin{align}\label{H-km1}
  \mathcal{H}_{k}^{k} := G_{k}^{k}[i^{k}\zeta h],
\end{align}
and
\begin{align}\label{H-km2}
  \mathcal{H}_{m}^{k} := \left(i^{k}G_{m}^{k}+\sum_{j=m}^{k-1}i^{j}G_{m}^{j}
\circ\mathfrak{A}\circ(\mathcal{N}-i\overline{T}) \right)[\zeta h]\;,\; 0 \leq m \leq k-1.
\end{align}
Because $h \in C^{\infty}(\Omega)$, so are the $\mathcal{H}^{k}_{m}$, $0\leq m \leq k$.
This establishes the representation (i) in Proposition \ref{P:antiderivative}, except for
membership in $H^{k}(\Omega)$.

We now prove the estimates (ii). By Lemmas \ref{L:ATellstuff}  and \ref{C:Aellestimate},
it follows that
\begin{align*}
  \left\|\mathcal{H}_{k}^{k}\right\|\lesssim\left\|t_{x}^{k}\cdot \zeta h\right\|\lesssim\|h\|_{-k},
\end{align*}
where the second step follows from \eqref{E:eqnorms2}; we use here that $h$ in
Proposition \ref{P:antiderivative} is holomorphic. By analogous reasoning, we have
\begin{align}\label{G-mk}
\|G_{m}^{k}[\zeta h]\| \lesssim \sum_{|\beta| \leq
|\alpha|}\|t_{x}^{k+|\alpha|}D^{\beta}h\| \lesssim \|D^{\beta}h\|_{-k-|\alpha|} \lesssim
\|h\|_{-k},
\end{align}
where $\alpha$ is such that $G_{m}^{k} \in \mathcal{A}^{k}_{\alpha,\nu}$ for some $\nu
\leq |\alpha|$. Note that $D^{\beta}h$ is also holomorphic, so
that \eqref{E:eqnorms2} applies. The last inequality holds because $|\beta| \leq
|\alpha|$.

To obtain the claimed estimates for the remaining terms in $\mathcal{H}_{m}^{k}$, first
note that because $G^{j}_{m} \in\sum_{\nu\leq j-m,|\alpha|-\nu\geq
0}\mathcal{A}^{j}_{\alpha,\nu}$, $G^{j}_{m}\circ\mathfrak{A}$ is a sum of terms of the
form
\begin{align*}
A^{1}_{\alpha_{1}}\circ A^{2}_{\alpha_{2}}\circ\cdots\circ A^{1}_{\alpha_{j}}\circ
D^{\gamma}\circ\mathfrak{A} = A^{1}_{\alpha_{1}}\circ\cdots\circ
A^{1}_{\alpha_{j}}\circ\mathfrak{A}\circ D^{\gamma} + A^{1}_{\alpha_{1}}\circ\cdots\circ
A^{1}_{\alpha_{j}}\circ\left[D^{\gamma},\mathfrak{A}\right],
\end{align*}
where $|\gamma| \leq |\alpha| \leq \nu \leq j-m \leq j \leq k-1$. Because
$\left[D^{\gamma},\mathfrak{A}\right] \in \mathcal{A}^{1}_{0,|\gamma|-1} +
\mathcal{A}^{1}_{1,|\gamma|}$ (Lemma \ref{L:basiccommutator2}), it follows, in view of
Lemma \ref{C:A1estimate}, that 
\begin{align}\label{G-jm}
\left\|G^{j}_{m}\circ\mathfrak{A}\left(\mathcal{N} - i\overline{T}\right)[\zeta h]\right\|
\; \lesssim \;  \left\|\left(\mathcal{N} - i\overline{T}\right)[\zeta h]\right\|_{k-1}.
\end{align}
The Cauchy--Riemann equations for $h$ yield
\begin{align}\label{E:YNpass} 
  \mathcal{N}h=iT_{1}h=i\overline{T_{1}}h = i\overline{T}h
\end{align}
since $T_{1}=\overline{T_{1}}$ and $\overline{L}h=0$. Consequently,
\begin{align}\label{holbenign}
  (\mathcal{N}-i\overline{T})[\zeta
h]=\left((\mathcal{N}-i\overline{T})[\zeta]\right)h.
\end{align}
Since $\left((\mathcal{N}-i\overline{T})[\zeta]\right)$ has compact support in
$\Omega\cap U$ that does not depend on $h$, it follows (for example from
\eqref{E:eqnorms2} by using that factors of $r$ are bounded away from zero on this
support, so that introducing them will at most `increase' the norm) that there exists a
constant $C_{k}$ such that
\begin{align}\label{holweakstrong}
  \left\|  (\mathcal{N}-i\overline{T})[\zeta h]\right\|_{k-1}
= \left\| 
\left( (\mathcal{N}-i\overline{T})[\zeta]\right) h\right\|_{k-1}
\leq C_{k}\|h\|_{-k} \; .
\end{align}
Combining \eqref{G-mk}, \eqref{G-jm}, and \eqref{holweakstrong} gives that
\begin{align*}
  \left\|\mathcal{H}_{m}^{k} \right\|\lesssim \|h\|_{-k}
\end{align*}
for all $m\in\{0,\dots,k-1\}$. This concludes the proof of (ii).

It remains to see that $\mathcal{H}^{k}_{m} \in H^{k}(\Omega)$. First note that because
$(\mathcal{N}-i\overline{T})[\zeta h] \in C^{\infty}_{0}(\Omega)$, it
follows easily from the form of the operators $G^{j}_{m}$ that $G^{j}_{m}\circ
\mathfrak{A}\circ (\mathcal{N}-i\overline{T})[\zeta h] \in C^{\infty}(\overline{\Omega})
\subset H^{k}(\Omega)$, $j \leq m \leq (k-1)$. The remaining contributions in
\eqref{H-km1} and \eqref{H-km2} that need to be checked are of the form $G^{k}_{m}[\zeta
h]$, $0 \leq m \leq k$. That is, we need to show that $D^{\beta}G^{k}_{m}[\zeta h] \in
L^{2}(\Omega)$ for $|\beta| \leq k$. But $D^{\beta}G^{k}_{m} = G^{k}_{m}D^{\beta} -
\left[G^{K}_{m}, D^{\beta}\right]$. The argument is now analogous to the
discussion above. For example, in view of Lemmas
\ref{L:ATellstuff}, \ref{C:basiccommutator3}, and \ref{C:Aellestimate}, the commutator
$\left[G^{k}_{m}, D^{\beta}\right]$ is a sum of terms in
$\mathcal{S}^{k+|\alpha|}_{\nu+|\beta|-1} + \mathcal{S}^{k+|\alpha|+1}_{\nu+|\beta|}
\subseteq \mathcal{S}^{k+|\alpha|}_{\nu+|\beta|}$, with $\nu\leq k-m$ and $|\alpha|\geq
\nu$. Therefore, arguing as in \eqref{G-mk}, we see that the contribution of each of these
terms to $\left\|\left[G^{k}_{m}, D^{\beta}\right](\zeta h)\right\|$ is dominated by
$\sum_{|\gamma|\leq \nu+|\beta|}\|D^{\gamma}h\|_{-k-|\alpha|}$. Because $|\gamma|\leq
\nu+|\beta|\leq |\alpha|+|\beta|\leq |\alpha|+k$, all these terms are indeed dominated
by $\|h\|$.

The argument for $G^{k}_{m}D^{\beta}$ is similar. This concludes the proof of Proposition
\ref{P:antiderivative}.
\end{proof}


\section{Proof of Theorem \ref{T:holconjsmoothing}}\label{S:5}

\begin{proof}[Proof of Theorem \ref{T:holconjsmoothing}]
As in the proof of Theorem \ref{T:main}, we invoke duality via Proposition
\ref{P:pieceofduality} and Remark \ref{dualityRemark}:
it suffices to show that
    \begin{align}
      \left|\int_{\Omega}f\overline{g} \right|\leq C \|f\|\cdot\|g\|_{-k_{1}}
    \end{align}
    for $f\in \overline{A^{0}}$ and $g\in A^{0}(\Omega)$.  We now use that for
holomorphic functions, membership in $A^{-m}(\Omega)$ for some $m$ is equivalent to having
a blow up rate near the boundary of at most a power of $1/d_{b\Omega}(z)$, where
$d_{b\Omega}(z)$ is the boundary distance function. We have the estimates

    \begin{align}\label{E:Bell}
      C_{m}^{1}\|h\|_{-m -2n-2}\leq\sup_{z\in\Omega}|h(z)|\cdot
d_{b\Omega}(z)^{m+2n}
\leq C_{m}^{2}\|h\|_{-m};
    \end{align}
    in both estimates, if the right-hand side is finite, then so is the left hand side 
(and the estimate holds; that is, the estimates are genuine estimates as opposed to \emph{a
priori} estimates). The inequalities \eqref{E:Bell} are essentially Lemma 2 in
\cite{Bell82b}, except that the norm in the left most term there is the
$(-m-4n)$-norm. The stronger version given here is in \cite{Straube84}, Theorems 1.1
and 1.3; see in particular the proof of the implication (iii) $\Rightarrow$ (iv) in
Theorem 1.1. 

Applying the second inequality in \eqref{E:Bell} to $\overline{f}$ (for $m=0$) and to $g$ (for $m=k_1$)
yields

\begin{equation*}
\sup_{z\in\Omega}|f(z)|\cdot d_{b\Omega}(z)^{2n}\leq C_{0}^{2}\,\|f\| \quad\text{ and }\quad
\sup_{z\in\Omega}|g(z)|\cdot d_{b\Omega}(z)^{k_1+2n}\leq C_{k_1}^{2}\,\|g\|_{-k_1}.
\end{equation*}
Multiply these inequalities, then apply the first inequality in \eqref{E:Bell} to
$\overline{f}g$ (which is holomorphic). The conclusion is
that $\overline{f}g$, hence $f\overline{g}$, belongs to $H^{-k_{1}-4n-2}(\Omega)$, and

\begin{align}\label{E:barfgestimate}
  \left\|\overline{f} g\right\|_{-k_{1}-4n-2}\leq C_{k_{1}}\|f\|\cdot\|g\|_{-k_{1}}.
\end{align}  
  
On the other hand, the following estimate also holds:

\begin{align}\label{E:fbargestimate}
  \left|\int_{\Omega}f\overline{g} \right|=\left|\int_{\Omega}\overline{f}g 
\right|\leq\widetilde{C}_{k_{1}}\left\|\overline{f}g
\right\|_{-k_{1}-4n-2}\|1\|_{k_{1}+4n+2}.
\end{align}
The inequality in \eqref{E:fbargestimate} is Proposition 1.9 in \cite{Straube84} which 
gives this estimate for the pairing of a harmonic function in some $H^{-m}(\Omega)$ with
an arbitrary function in $C^{\infty}(\overline{\Omega})$. Note that since both $f$ and $g$
are in $L^{2}(\Omega)$, $\overline{f}g$ is integrable, and the integral denoted by
$\widetilde{\int}$ in \cite{Straube84} coincides with the ordinary integral over $\Omega$.
Combining \eqref{E:barfgestimate} and \eqref{E:fbargestimate} completes the proof of
Theorem \ref{T:holconjsmoothing}.
\end{proof}

\begin{remark}\label{duality}
When Condition R holds, one can extend $B$ by duality to a projection $\widetilde{B}$ from
the dual $\left(C^{\infty}(\overline{\Omega})\right)^{*}$ into
$\cup_{k=1}^{\infty}A^{-k}(\Omega)$. This was observed in \cite{Kerzman72}, in a note
added in proof (where the idea is attributed to Nirenberg and Tr\`{e}ves). The conclusion
of Theorem \ref{T:holconjsmoothing} remains true for this extended projection: when
$f$ is in $\overline{\cup_{k=1}^{\infty}A^{-k}(\Omega)}$, then $\widetilde{B}f$ is smooth
up to the boundary (see \cite{Straube84}, Theorem 3.4, for the `canonical' inclusion
$\overline{\cup_{k=1}^{\infty}A^{-k}(\Omega)} \hookrightarrow
\left(C^{\infty}(\overline{\Omega})\right)^{*}$). A similar discussion applies when
\eqref{main1} holds. 
\end{remark}
 
\vskip .5cm

\bibliographystyle{plain}
\bibliography{HMS}  

\end{document}